\newcommand{\pfrac}[2]{\genfrac{}{}{}{1}{#1}{#2}}
\newtheorem{theorem}{Theorem}
\newtheorem{example}[theorem]{Example}
\newtheorem{lemma}[theorem]{Lemma}
\newtheorem{proposition}[theorem]{Proposition}
\newtheorem{remark}[theorem]{Remark}
\begin{document}

\title[Large Deviations via Aubry-Mather theory]{Large Deviations  for stationary probabilities of a family of continuous  time Markov chains via Aubry-Mather theory}

\author{Artur O. Lopes}
\address{UFRGS, Instituto de Matem\'atica, Av. Bento Gon\c calves, 9500. CEP 91509-900, Porto Alegre, Brasil}
\curraddr{}
\email{arturoscar.lopes@gmail.com}
\thanks{}

\author{Adriana Neumann}
\address{UFRGS, Instituto de Matem\'atica, Av. Bento Gon\c calves, 9500. CEP 91509-900, Porto Alegre, Brasil}
\curraddr{}
\email{aneumann@impa.br}
\thanks{}

\date{\today}
\maketitle

\bigskip

\begin{abstract}

In the present paper, we consider  a family of continuous time symmetric random walks indexed by $k\in \mathbb{N}$, $\{X_k(t),\,t\geq 0\}$. For each $k\in \mathbb{N}$ the
matching random walk  take values in the finite set of states $\Gamma_k=\frac{1}{k}(\mathbb{Z}/k\mathbb{Z})$; notice that $\Gamma_k$ is a subset of $\mathbb{S}^1$, where $\mathbb{S}^1$ is
the unitary circle. The infinitesimal generator of such chain is denoted by $L_k$.
The stationary probability for such process converges to the uniform distribution on the circle, when $k\to \infty$.
Here we want to study other natural measures, obtained via a limit on $k\to \infty$, that are concentrated on some points of $\mathbb{S}^1$.
We will disturb this process by a potential and study for each $k$ the perturbed stationary measures of this new process when $k\to \infty$.

We disturb the system considering  a fixed $C^2$ potential
$V: \mathbb{S}^1 \to \mathbb{R}$ and  we will denote by $V_k$ the restriction of
$V$ to $\Gamma_k$.
Then, we define
a non-stochastic semigroup generated by the matrix
$k\,\, L_k + k\,\, V_k$, where $k\,\, L_k $ is the infinifesimal generator of $\{X_k(t),\,t\geq 0\}$. From the continuous time Perron's Theorem one can normalized such semigroup, and, then we get another stochastic semigroup which
generates a continuous time Markov Chain taking values on $\Gamma_k$. This new chain is called the continuous time Gibbs state associated to the potential $k\,V_k$, see \cite{LNT}. The stationary probability vector for such Markov Chain
is denoted by $\pi_{k,V}$. We assume that the maximum of $V$ is attained in a unique point $x_0$ of $\mathbb{S}^1$, and from this will follow that $\pi_{k,V}\to \delta_{x_0}$. Thus, here, our main goal is to analyze the large deviation
principle for the family $\pi_{k,V}$, when $k \to\infty$.  The deviation function $I^V$, which is defined on
$ \mathbb{S}^1$, will be obtained from  a procedure based on  fixed points of the Lax-Oleinik operator and Aubry-Mather theory.
In order to obtain the associated Lax-Oleinik operator we use the Varadhan's Lemma for the process  $\{X_k(t),\,t\geq0\}$. For a careful analysis of the problem we present full details of the proof of the   Large Deviation Principle, in the Skorohod space, for such family of Markov Chains, when $k\to \infty$.
Finally, we compute the entropy of the invariant probabilities on the  Skorohod space associated to the Markov Chains we analyze.

\end{abstract}

\bigskip

\section{Introduction}\label{sec1}

We will study  a family of continuous time Markov Chains indexed by $k\in \mathbb{N}$, for each $k\in \mathbb{N}$ the corresponding Markov Chain  take values in the finite set of states $\Gamma_k=\frac{1}{k}(\mathbb{Z}/k\mathbb{Z})$.
Let $\mathbb{S}^1$ be the unitary circle which can be identified with the interval $[0,1)$. In this way we identify $\Gamma_k$ with $\{0,\,1/k,\, 2/k,...,\,(k-1)/k\}$ in order to simplify the notation.
We will analyse below a limit procedure on $k\to \infty $ and this is the reason why we will consider that the values of the states of the chain are in the unitary circle. The continuous time Markov Chain with index $k$ has the following behaviour: if the particle is at
$j/k$ it waits an exponential time of parameter $2$ and then jumps either to $(j-1)/k$ or to $(j+1)/k$ with probability $1/2$. In order to simplify the notation, we omit the indication that the the sum $j+1$ is mod $k$ and the same for the subtraction $j-1$; we will do this without other comments in the rest of the  text.
The skeleton of this continuous time Markov Chain has matrix of transitions $\mathcal{P}_k=(p_{i,j})_{i,j}$  such that the element $p_{j,j+1}$ describes the probability of transition  of $i/k$ to  $j/k$, which is $p_{i,i+1}=p_{i,i-1}=1/2$ and $p_{i,j}=0$,  for all $j\neq i$. The infinitesimal generator is the matrix
$L_k=2(\mathcal{P}_k-I_k)$, where $I_k$ is the identity matrix, in words  $L_k$ is a  matrix  that is equal to $-2$ in the diagonal $L_{i,j}=1$ above and below the diagonal, and the rest is zero.
Notice that $L_k$ is symmetric matrix.
For instance, take $k=4$,
$$L_4= \left(
\begin{array}{cccc}
-2 & 1 & 0 & 1 \\
1 & -2 & 1 & 0 \\
0 & 1 & -2 & 1 \\
1 & 0 & 1 & -2 \\
\end{array}\right).
$$

We can write this infinitesimal generator as an operator acting on functions $f: \Gamma_k\to\mathbb{R}$ as
\begin{equation}\label{ger}
\begin{split}
(\mathcal{L}_kf)(\pfrac{j}{k})=\big[f(\pfrac{j+1}{k})-f(\pfrac{j}{k})\big]+\big[f(\pfrac{j-1}{k})-f(\pfrac{j}{k})\big].
\end{split}
\end{equation}
Notice that this expression describes the  infinitesimal generator of continuous time random walk.
For each $k\in \mathbb{N}$, we denote $P_k(t)=e^{t\, L_k}$ the semigroup associated to this infinitesimal generator.
We also denote by $\pi_k$ the uniform probability on $\Gamma_k$. This is the invariant probability for the above defined continuous Markov Chain.
The probability $\pi_k$ converges to the Lebesgue
measure on $ \mathbb{S}^1$, as  $k \to \infty$.

Fix $T>0$ and $x_0\in\mathbb{S}^1$, let $\mathbb{P}_k$ be probability on the Skorohod space $D[0,T]$, the space of \emph{c\`adl\`ag} trajectories taking values on $\mathbb{S}^1$, which are induced by  the infinitesimal generator  $k\mathcal L_k$ and the initial probability $\delta_{x_k(x_0)}$, which is the Delta of Dirac at $x_k(x_0):=\lfloor k x_0\rfloor/k\in \Gamma_k$, where $x_k(x_0)$ is the closest point to $x_0$ on the left of $x_0$ in the set $\Gamma_k$. Denote by $\mathbb{E}_k$ the expectation with respect to $\mathbb{P}_k$ and by $\{X_k(t)\}_{t\in [0,T]}$ the continuous time Markov chain with the infinitesimal generator $k\mathcal L_k$. One of our goals is described in the Section \ref{sec2} which is to establish a Large Deviation Principle for $\{\mathbb{P}_k\}_k$ in $D[0,T]$. This will be used later on the Subsection \ref{subsec3.1} to define the Lax-Oleinik semigroup. One can ask: why we use this time scale? Since  the continuous time symmetric random walk converges just when the time is rescaled with speed $k^2$, then taking speed $k$ the symmetric random walk  converges to a constant trajectory. Here the setting follows similar ideas  as the ones in the papers \cite{A1} and \cite{A2}, where N. Anantharaman used the Shilder's Theorem. The Shilder's Theorem says that for $\{B_t\}_t$  (the standard Brownian Motion) the sequence $\{\sqrt{\varepsilon}B_t\}_t$, which  converges to a trajectory constant equal to zero, when $\varepsilon\to 0$, has   rate of  convergence equal to $I(\gamma)=\int_0^T\frac{(\gamma'(s))^2}{2}\,ds$, if $\gamma:[0,T]\to\mathbb{R}$ is absolutely continuous, and $I(\gamma)=\infty$, otherwise.

We proved that the sequence of measures $\{\mathbb{P}_k\}_k$ satisfy the large deviation principle with  rate  function $I_T: D[0,T]\to \mathbb{R}$ such that
\begin{equation*}
\begin{split}
&I_{T}(\gamma)=\int_0^T
\Big\{\gamma'(s)\log\Big(\frac{\gamma'(s)+\sqrt{(\gamma'(s))^2+4}}{2}\Big)
-\sqrt{(\gamma'(s))^2+4}+2\Big\}\,ds,
\end{split}
\end{equation*}
if $\gamma \in \mathcal{AC}[0,T]$ and $I_{T}(\gamma)=\infty$, otherwise.

Finally, in Section \ref{sec3}, we consider this system disturbed by a $C^2$ potential $V:\mathbb S^1 \to \mathbb{R}.$ The restriction of $V$  to $ \Gamma_k$ is denoted by $V_k$. From the continuous  time Perron's Theorem
we get an eigenvalue and an eigenfunction for the operator $k\,L_k + k \, V_k$. Then, normalizing the semigroup associated to $k\,L_k + k \, V_k$ via the eigenvalue and eigenfunction of this operator, we obtain a new continuous time Markov Chain, which is  called the Gibbs Markov Chain associated to $k\, V_k$  (see \cite{BEL} and \cite{LNT}). Denote by $\pi_{k,V}$ the initial stationary vector of this family of continuous time Markov Chains indexed by $k$ and which takes values on  $ \Gamma_k\subset \mathbb S^1$. We investigate the large deviation properties of this family of stationary vectors which are probabilities on $\mathbb S^1$, when $k\to \infty$.
More explicitly, roughly speaking, the deviation function $I^V$ should satisfy the property: given an interval $[a,b]$
$$\lim_{k\to \infty} \pfrac{1}{k} \,\log \,\pi_{k,V}\,[a,b]\,=\,-\inf_{x \in [a,b]}{I^V(x)}.$$

If $V:\mathbb S^1 \to \mathbb{R}$ attains the maximal value in just one  point $x_0$, then, $\pi_{k,V}$ weakly converge, as $k\to \infty$, to the delta Dirac in $x_0.$
We will use results of Aubry-Mather theory (see \cite{BG}, \cite{CI},  \cite{Fath}  or \cite{Fat}) in order to exhibit the deviation function $I^V$, when $k \to \infty$.

It will be natural to consider the Lagrangian defined on $S^1$ given by
$$
L(x,v)= - V(x) + v \log(  (v + \sqrt{v^2 + 4} )/2 ) - \sqrt{v^2
+ 4} + 2,
$$
which is convex and superlinear. It is easy  to get the explicit expression of the associated Hamiltonian,

As we will see the deviation function is obtained from certain weak KAM solutions of the associated Hamilton-Jacobi equation (see Section 4 and 7 in \cite{Fat}). In the one-dimensional case $\mathbb S^1$ the weak KAM solution can be in some cases explicitly obtained (for instance when $V$ as a unique point of maximum). From the conservation of energy (see \cite{Car}), in this case,  one can get a solution (periodic) with just one point of lack of differentiability.

It follows from the continuous time Perron's Theorem that the probability vector $\pi_{k,V}$ depends for each $k$ on  a left eigenvalue
and on a right eigenvalue. In this way, in the limit procedure, this will require in our reasoning  the use of the positive time and negative time Lax-Oleinik operators (see \cite{Fat}).

From a theoretical perspective, following our reasoning, one can think that we are looking for the maximum of a function
$V:\mathbb S^1 \to \mathbb{R}$ via an stochastic procedure based on  continuous time Markov Chains  taking
values on the finite lattice $\Gamma_k$, $k \in \mathbb{N}$, which is a discretization of the circle $\mathbb S^1$.
Maybe this can be explored as an alternative approach to Metropolis algorithm, which is base in frozen arguments. In our setting the
deviation function $I^V$ gives bounds for the decay of the probability that the stochastic procedure corresponding to a certain $k$ does not
localize the maximal value.

Moreover, in the  Section \ref{sec4} we compute explicitly the entropy of the Gibbs state on the Skhorod space associated to the potential  $k\,V_k$. In this moment we  need to generalize a  result which was obtained in \cite{LNT}. After that, we take the limit on $k\to \infty$, and we obtain the entropy for the  limit process which in this case is shown to be zero.

\section{Large Deviations on the Skorohod space \\for the unperturbed system}\label{sec2}
The goal of this section is to prove the Large Deviation Principle for the sequence of measures
 $\{\mathbb{P}_k\}_k$ on $D[0,T]$, defined in Section \ref{sec1}. We recall that $\mathbb{P}_k$ is induced by the continuous time random walk, which has infinitesimal generator $k\mathcal L_k$, see \eqref{ger}, and the initial measure $\delta_{x_k(x_0)}$, which is the Delta of Dirac at $x_k(x_0)=\lfloor k x_0\rfloor/k\in \Gamma_k$.
\begin{theorem}\label{teo1}
The sequence of probabilities $\{\mathbb{P}_k\}_k$ satisfies:
\begin{itemize}
\item[] Upper Bound: For all $\mathcal{C}\subset D[0,T]$ closet set,
 \begin{equation*}
\begin{split}
&\varlimsup_{k\to\infty}\frac{1}{k}\log\mathbb{P}_k\Big[X_k\in \mathcal C\Big]\leq -\inf_{\gamma\in \mathcal O}I_{T}(\gamma) .
\end{split}
\end{equation*}
\item[] Lower Bound: For all $\mathcal{O}\subset D[0,T]$ open set,\begin{equation*}
\begin{split}
&\varliminf_{k\to\infty}\frac{1}{k}\log\mathbb{P}_k\Big[X_k\in \mathcal O\Big]\geq -\inf_{\gamma\in \mathcal O}I_{T}(\gamma) .
\end{split}
\end{equation*}
\end{itemize}
The rate function $I_{T}: D[0,T]\to \mathbb{R}$ is
\begin{equation}\label{funcional}
\begin{split}
&I_{T}(\gamma)=\int_0^T
\Big\{\gamma'(s)\log\Big(\frac{\gamma'(s)+\sqrt{(\gamma'(s))^2+4}}{2}\Big)
-\sqrt{(\gamma'(s))^2+4}+2\Big\}\,ds,
\end{split}
\end{equation}
if $\gamma \in \mathcal{AC}[0,T]$ and $I_{T}(\gamma)=\infty$, otherwise.
\end{theorem}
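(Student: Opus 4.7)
The plan is to establish the LDP by the classical exponential--martingale method in the spirit of Varadhan and Kipnis--Landim. The starting point is the fact that for any smooth $H\in C^{1,2}([0,T]\times\mathbb S^1)$ the process
\begin{equation*}
M^k_H(t) \;=\; \exp\!\Big(k H(t,X_k(t)) - k H(0,X_k(0)) - \int_0^t e^{-kH(s,\cdot)}\bigl(\partial_s + k\mathcal L_k\bigr)e^{kH(s,\cdot)}(X_k(s))\,ds\Big)
\end{equation*}
is a mean--one $\mathbb P_k$--martingale. A direct computation using that the jump sizes are $\pm 1/k$ gives
\begin{equation*}
\tfrac{1}{k}\,e^{-kH(s,x)}(k\mathcal L_k)e^{kH(s,x)} \;=\; e^{k(H(s,x+1/k)-H(s,x))} + e^{k(H(s,x-1/k)-H(s,x))} - 2 \;\longrightarrow\; e^{\partial_x H(s,x)} + e^{-\partial_x H(s,x)} - 2,
\end{equation*}
uniformly on compact sets of smooth $H$, identifying the Hamiltonian $\mathbb H(\lambda)=e^{\lambda}+e^{-\lambda}-2$, whose Legendre dual in $v$ coincides with the integrand of $I_T$ by a routine computation.

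For the upper bound I would first establish \emph{exponential tightness} of $\{\mathbb P_k\}$ at speed $k$: since the total number of jumps by time $T$ is Poisson of mean $2kT$, both the maximal number of jumps and the Aldous-type oscillation modulus admit exponential tail bounds at the correct rate, yielding tightness in the Skorohod topology on $D[0,T]$. Then applying Markov's inequality to $M^k_H$ and using the asymptotic above gives, for every closed $\mathcal C\subset D[0,T]$ and every smooth $H$,
\begin{equation*}
\varlimsup_{k\to\infty}\tfrac{1}{k}\log\mathbb P_k[X_k\in\mathcal C] \;\le\; -\inf_{\gamma\in\mathcal C}\,J_H(\gamma),
\end{equation*}
where $J_H(\gamma):=H(T,\gamma(T))-H(0,\gamma(0))-\int_0^T\{\partial_s H + \mathbb H(\partial_x H)\}(s,\gamma(s))\,ds$. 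It then remains to verify $\sup_H J_H(\gamma)=I_T(\gamma)$: for $\gamma\in\mathcal{AC}[0,T]$, integrating the $\partial_s H$ term by parts reduces $J_H(\gamma)$ to $\int_0^T\{\partial_x H(s,\gamma(s))\gamma'(s)-\mathbb H(\partial_x H(s,\gamma(s)))\}\,ds$, and the pointwise choice $\partial_x H(s,\gamma(s))=\log\bigl((\gamma'(s)+\sqrt{\gamma'(s)^2+4})/2\bigr)$ realizes the Legendre dual, recovering exactly the integrand in \eqref{funcional}; for non absolutely continuous $\gamma$, testing against linear $H$ of large slope makes $\sup_H J_H(\gamma)=+\infty$.

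For the lower bound I would use a Girsanov-type change of measure: given $\gamma\in\mathcal{AC}[0,T]$ with $I_T(\gamma)<\infty$ and a Skorohod neighbourhood $\mathcal O$ of $\gamma$, pick a smooth $H_\gamma$ with $\partial_x H_\gamma(s,\gamma(s))$ equal to the optimal $\lambda(s)$ above, and set $d\widetilde{\mathbb P}_k := M^k_{H_\gamma}(T)\,d\mathbb P_k$. Under $\widetilde{\mathbb P}_k$ the chain becomes an inhomogeneous random walk with jump rates $k e^{\pm\partial_x H_\gamma(s,X_k(s))}$, whose hydrodynamic drift equals $\gamma'(s)$, so $\widetilde{\mathbb P}_k[X_k\in\mathcal O]\to 1$ by a standard law-of-large-numbers argument. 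Writing $\mathbb P_k[X_k\in\mathcal O]=\widetilde{\mathbb E}_k[(M^k_{H_\gamma}(T))^{-1}\mathbf 1_{\{X_k\in\mathcal O\}}]$ and noting that $-\tfrac{1}{k}\log M^k_{H_\gamma}(T)$ concentrates at $I_T(\gamma)$ under $\widetilde{\mathbb P}_k$ yields $\varliminf_k \tfrac{1}{k}\log\mathbb P_k[X_k\in\mathcal O]\ge -I_T(\gamma)$. The hardest step is to control uniformly, along typical trajectories of the tilted process, the $O(1)$ discretization error between $k[e^{k(H(s,x\pm 1/k)-H(s,x))}-1]$ and its formal limit $k[e^{\pm\partial_x H(s,x)}-1]$; combining this with a density argument that approximates arbitrary $\gamma\in\mathcal{AC}[0,T]$ of finite cost by piecewise-smooth curves (so that one may choose $H_\gamma$ sufficiently regular) accounts for the bulk of the technical work.
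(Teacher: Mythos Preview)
Your strategy is the same exponential--martingale method the paper uses, and the overall architecture (tightness, variational upper bound, Girsanov lower bound) matches. Two points are worth flagging.

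First, the paper makes a choice that removes what you call ``the hardest step'': instead of testing against general $H(s,x)$, it uses functions that are \emph{linear in the space variable}, namely $H(s,x)=\lambda(s)\,x$ with $\lambda$ piecewise linear in time. Because the increments of $X_k$ are exactly $\pm 1/k$, one gets
\[
\tfrac{1}{k}\,e^{-k\lambda(s)X_k(s)}\,(k\mathcal L_k)\,e^{k\lambda(s)X_k(s)} \;=\; e^{\lambda(s)}+e^{-\lambda(s)}-2
\]
\emph{exactly}, with no $k\to\infty$ limit and no discretization remainder to control. Since the Hamiltonian $\mathbb H(\lambda)=e^{\lambda}+e^{-\lambda}-2$ does not depend on $x$, this restricted class of test functions is already rich enough to recover the full Legendre dual, and the same linear--in--$x$ choice works for the tilt in the lower bound (the tilted generator has spatially constant rates $ke^{\pm\lambda(s)}$, so the law of large numbers for $X_k^\lambda$ is a direct martingale estimate). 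Your more general $H(s,x)$ is not wrong, but it creates work that the problem does not require.

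Second, there is a genuine gap in your upper bound as written. From ``for every $H$, $\varlimsup\le -\inf_{\gamma\in\mathcal C}J_H(\gamma)$'' you can only conclude $\varlimsup\le -\sup_H\inf_{\gamma\in\mathcal C}J_H(\gamma)$, whereas you need $-\inf_{\gamma\in\mathcal C}\sup_H J_H(\gamma)$; the inequality goes the wrong way. The paper closes this by first restricting to \emph{compact} $\mathcal C$, invoking a Minimax lemma (continuity of $\gamma\mapsto J_H(\gamma)$ and compactness allow the exchange of $\sup_H$ and $\inf_\gamma$), and only then passing to closed sets via exponential tightness. You should insert this step explicitly; it is also why the paper works with a countable family of test functions (polygonal $\lambda$) rather than all smooth $H$. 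For tightness itself, your Poisson/Aldous sketch is fine, though the paper instead bounds the modulus of continuity by applying Doob's inequality to the same exponential martingale with constant $\lambda$.
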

The set $\mathcal{AC}[0,T]$ is  the set of all absolutely continuous functions $\gamma:[0,T] \to  \mathbb{S}^1$.
Saying that a function $\gamma:[0,T] \to  \mathbb{S}^1$ is absolutely continuous means that for
all $\varepsilon>0$ there is $\delta>0$, such that, for all family of intervals $\{(s_i,t_i)\}_{i=1}^{n}$ on $[0,T]$, with $\sum_{i=1}^{n} t_i-s_i<\delta$, we have
$\sum_{i=1}^{n} \gamma(t_i)-\gamma(s_i)<\varepsilon$.

\begin{proof}
This proof is divided in two parts: upper bound and lower bound.
The proof of the upper bound is on Subsections \ref{subsec2.2} and  \ref{subsec2.3}.
And, the proof of the lower bound is Subsection  \ref{subsec2.4}.
In the Subsection  \ref{subsec2.1}, we prove some useful tools for this proof, like the one related to the perturbation of the system and also the computation of the Lengendre transform.
\end{proof}

\subsection{Useful tools}\label{subsec2.1}
In this subsection we will prove some important results for the upper bound and for the lower bound. More specifically, we will study  a typical pertubation of the original system and also the Radon-Nikodym derivative of this process. Moreover, we will compute the Fenchel-Legendre transform for a function $H$ that appears in a natural way in the Radon-Nikodym derivative.

For a time partition $0=t_0<t_1<t_2<\dots<t_n=T$ and  for $\lambda_i:[t_{i-1},t_i]\to\mathbb{R}$ a linear function with linear coefficient $\lambda_i$, for $i\in\{1,\dots,n\}$, consider a polygonal function $\lambda:[0,T]\to\mathbb R$  as $\lambda(s)=\lambda_i(s)$  in $[t_{i-1},t_i]$, for all $i\in\{1,\dots,n\}$.

For each $k\in \mathbb N$ and for the polygonal function $\lambda:[0,T]\to \mathbb R$, defined above, consider  the martingale
\begin{equation}\label{martingale1}
M^k_t=\exp\Big\{k\,\big[\lambda(t)X_k(t)-\lambda(0)X_k(0)-\frac{1}{k}\int_0^t e^{-k\lambda(s) X_k(s)}(\partial_s+k{\mathcal L}_k) e^{k\lambda(s) X_k(s)}ds\big]\Big\},
\end{equation}
notice that $M^k_t$ is positive and $\mathbb{E}_k[M^k_t]=1$, for all $t\geq 0$, see Appendix 1.7 of \cite{KL}. Making a simple calculation, the part of the expression inside the integral can rewritten as
\begin{equation*}
\begin{split}
 e^{-k\lambda(s)X_k(s)}k{\mathcal L}_k e^{k\lambda(s)X_k(s)}=&
 \,e^{-k\lambda(s)X_k(s)}k\Big\{e^{k\lambda(s)( X_k(s)+1/k)}-e^{k\lambda(s) X_k(s)}\\&\qquad\qquad\qquad+e^{k\lambda(s)( X_k(s)-1/k)}-e^{k\lambda(s) X_k(s)}\\
 =&\,e^{-k\lambda(s)X_k(s)}k \,e^{k\lambda(s) X_k(s)}\Big\{e^{\lambda(s)}-1+e^{-\lambda(s)}-1\Big\}\\
 =&k \,\Big\{e^{\lambda(s)}+e^{-\lambda(s)}-2\Big\}\\
  =&k \,H(\lambda(s)),\\
 \end{split}
\end{equation*}
where $H(\lambda):=e^\lambda+e^{-\lambda}-2$. Since $\lambda$ is a polygonal function,
the other part of the expression inside the integral is equal to
\begin{equation*}
\begin{split}
 e^{-k\lambda(s)X_k(s)}\partial_s\,e^{k\lambda(s)X_k(s)}=&
 \,e^{-k\lambda(s)X_k(s)}\,e^{k\lambda(s) X_k(s)}k\lambda'(s)X_k(s)\\
 =&
 \,k\lambda'(s)\,X_k(s)=\,k\sum_{i=0}^{n-1}\lambda_{i+1}\textbf{1}_{[t_{i},t_{i+1}]}(s)\,X_k(s).
 \end{split}
\end{equation*}
Using telescopic sum, we have
\begin{equation*}
\begin{split}\lambda(T)X_k(T)-\lambda(0)X_k(0)&=\sum_{i=0}^{n-1}\big[\lambda_{i+1}(t_{i+1})X_k(t_{i+1})-\lambda_{i}(t_{i})X_k(t_{i})\big]\\&=\sum_{i=0}^{n-1}\big[\lambda_{i+1}(t_{i+1})X_k(t_{i+1})-\lambda_{i+1}(t_{i})X_k(t_{i})\big].
 \end{split}
\end{equation*}
The last equality follows from the fact that $\lambda$ is a polygonal function $(\lambda_{i}(t_{i})=\lambda_{i+1}(t_{i}))$.
Thus, the martingale $M^k_T$ becomes
\begin{equation}\label{martingale2}
\begin{split}
M^k_T=\exp\Bigg\{k\,\sum_{i=0}^{n-1}\Big[&\,\lambda_{i+1}(t_{i+1})X_k(t_{i+1})-\lambda_{i+1}(t_{i})X_k(t_{i})\\&-\int_{t_{i}}^{t_{i+1}} \!\!\![\,\lambda_{i+1}\,X_k(s)+ H(\lambda_{i+1}(s))\,]\,ds \,\Big]\Bigg\}.
\end{split}
\end{equation}

\begin{remark}\label{lambda_dif}
If $\lambda:[0,T]\to\mathbb R$ is an absolutely continuous function, the expression for the martingale $M^k_T$ can be rewritten as
\begin{equation*}
\begin{split}
M^k_T=\exp\Bigg\{k\,\Big[\lambda(T)X_k(T)-\lambda(0)X_k(0)-\int_{0}^{T} \![\,\lambda'(s)\,X_k(s)+ H(\lambda(s))\,]\,ds \,\Big]\Bigg\}.
\end{split}
\end{equation*}
\end{remark}

\bigskip

Define a measure on  $D[0,T]$ as
\begin{equation*}
\mathbb{P}_k^\lambda[A]=\mathbb{E}_k[\mathbf{1}_A(X_k)\,M^k_T],
\end{equation*}
for all set $A$ in $D[0,T]$.  For us $\mathbf{1}_A$ is the indicator function of the set $A$, it means that $\mathbf{1}_A(x)=1$ if $x\in A$ or $\mathbf{1}_A(x)=0$ if $x\notin A$.

One can observe that this measure is associated to  a non-homogeneous in time process, which have infinitesimal generator acting on functions $f: \Gamma_k\to\mathbb{R}$ as
\begin{equation*}
\begin{split}
(\mathcal{L}_k^{\lambda(t)} f)(\pfrac{j}{k})=e^{ \lambda(t) }\big[f(\pfrac{j+1}{k})-f(\pfrac{j}{k})\big]+e^{-\lambda(t) }\big[f(\pfrac{j-1}{k})-f(\pfrac{j}{k})\big].
\end{split}
\end{equation*}
By Proposition 7.3 on Appendix 1.7 of \cite{KL}, $M^k_T$ is a Radon-Nikodym derivative   $\frac{d\mathbb{P}_k^\lambda}{d\mathbb{P}_k}$.

To finish this section, we will analyse the properties of the function $H$, which appeared in the definition of the martingale $M_T^k$.

\begin{lemma}\label{Legendre}
Consider the function
\begin{equation*}
\begin{split}
H(\lambda)=e^\lambda+e^{-\lambda}-2
\end{split}
\end{equation*}
 the Fenchel-Legendre transform of $H$ is
\begin{equation}\label{legendre1}
\begin{split}
L(v)=\sup_{\lambda} \big\{\lambda v -H(\lambda)\big\}=v\log\Big(\pfrac{1}{2}\Big(v+\sqrt{(v)^2+4}\Big)\Big)-\sqrt{(v)^2+4}+2\,.
\end{split}
\end{equation}
Moreover, the supremum above is attain on $\lambda_v=\log\Big(\pfrac{1}{2}\Big(v+\sqrt{(v)^2+4}\Big)\Big)$.
\end{lemma}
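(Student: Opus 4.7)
The plan is a direct computation of the Fenchel-Legendre transform via first-order conditions, exploiting the strict convexity of $H$. First I would observe that $H''(\lambda) = e^\lambda + e^{-\lambda} > 0$, so the map $\lambda \mapsto \lambda v - H(\lambda)$ is strictly concave for every fixed $v \in \mathbb{R}$ and therefore the supremum is attained at the unique critical point, provided such a critical point exists. Since $H'(\lambda) = e^\lambda - e^{-\lambda}$ is a strictly increasing bijection from $\mathbb{R}$ onto $\mathbb{R}$, existence and uniqueness of the maximizer $\lambda_v$ are guaranteed.

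Next I would solve the critical point equation $v = H'(\lambda_v) = e^{\lambda_v} - e^{-\lambda_v}$. Setting $y = e^{\lambda_v}$ turns this into the quadratic $y^2 - v y - 1 = 0$, whose only positive root is $y = \tfrac{1}{2}(v + \sqrt{v^2 + 4})$. Taking logarithms gives the announced formula
\begin{equation*}
\lambda_v = \log\!\Big(\tfrac{1}{2}\big(v + \sqrt{v^2+4}\big)\Big).
\end{equation*}

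Finally I would substitute $\lambda_v$ back into $\lambda v - H(\lambda)$. A short manipulation (rationalizing the denominator) shows
\begin{equation*}
e^{-\lambda_v} = \frac{2}{v + \sqrt{v^2+4}} = \frac{\sqrt{v^2+4} - v}{2},
\end{equation*}
hence $e^{\lambda_v} + e^{-\lambda_v} = \sqrt{v^2+4}$ and $H(\lambda_v) = \sqrt{v^2+4} - 2$. Plugging this into $L(v) = \lambda_v v - H(\lambda_v)$ yields exactly the expression in \eqref{legendre1}.

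There is no real obstacle here; the only mildly delicate point is the algebraic simplification of $e^{\lambda_v} + e^{-\lambda_v}$, which is why I would carry out the rationalization explicitly rather than leaving it to the reader. Everything else is a standard convex-duality argument for a smooth, strictly convex, superlinear function on $\mathbb{R}$.
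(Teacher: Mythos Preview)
Your proof is correct and follows exactly the approach of the paper, which simply states ``Maximizing $\lambda v -(e^\lambda+e^{-\lambda}-2)$ on $\lambda$, we obtain the expression on \eqref{legendre1}.'' You have merely supplied the routine details the authors left implicit.
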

\begin{proof}
Maximizing  $\lambda v -(e^\lambda+e^{-\lambda}-2)$ on $\lambda$, we obtain the expression on \eqref{legendre1}.
\end{proof}

Then, we can rewrite the rate functional $I_{T}: D[0,T]\to \mathbb{R}$, defined in \eqref{funcional}, as
\begin{eqnarray}\label{func_melhor}
I_{T}(\gamma)=\left\{\begin{array}{ll}\int_0^T L(\gamma' (s))\,ds,& if\,\, \gamma \in \mathcal{AC}[0,T],\\
\infty,&otherwise.
\end{array}\right.
\end{eqnarray}

\subsection{Upper bound for compact sets}\label{subsec2.2}
Let $\mathcal C$ be an open set of $D[0,T]$. For all $\lambda:[0,T]\to\mathbb R$ polygonal function as in Subsection \ref{subsec2.1}, we have
\begin{equation*}
\begin{split}
&\mathbb{P}_k\Big[X_k\in \mathcal C\Big]
=\mathbb{E}_k^\lambda\Big[\mathbf{1}_{\mathcal C}(X_k^\lambda)\frac{d\mathbb{P}_k}{d\mathbb{P}_k^\lambda}\Big]=
\mathbb{E}_k^\lambda\Big[\mathbf{1}_{\mathcal C}(X_k^\lambda)(M^k_T)^{-1}\Big]\\
&=\mathbb{E}_k^\lambda\Bigg[\mathbf{1}_{\mathcal C}(X_k^\lambda)\,\exp\Big\{-k\,\sum_{i=1}^{n}\Big(\,\lambda_{i+1}(t_{i+1})X_k(t_{i+1})-\lambda_{i+1}(t_{i})X_k(t_{i})\\&\qquad\qquad\qquad\qquad\qquad-\int_{t_{i}}^{t_{i+1}} \!\!\![\,\lambda_{i+1}\,X_k(s)+ H(\lambda_{i+1}(s))\,]\,ds \,\Big)\Big\}\Bigg]\\
&\leq\sup_{\gamma\in \mathcal C}\, \exp\Bigg\{-k\,\sum_{i=1}^{n}\Big(\,\lambda_{i+1}(t_{i+1})\gamma(t_{i+1})-\lambda_{i+1}(t_{i})\gamma(t_{i})\\&\qquad\qquad\qquad\qquad\qquad-\int_{t_{i}}^{t_{i+1}} \!\!\![\,\lambda_{i+1}\,\gamma(s)+ H(\lambda_{i+1}(s))\,]\,ds \,\Big)\Bigg\}\\
&=\exp\Big\{-k\,\inf_{\gamma\in \mathcal C}\, \sum_{i=0}^{n-1}J^{i+1}_{\lambda_{i+1}}(\gamma)\Big\},\\
\end{split}
\end{equation*}
for all $\lambda_{i+1}:[t_i,t_{i+1}]\to\mathbb{R}$ linear function, where $J^{i+1}_{\lambda_{i+1}}(\gamma)$ is equal to
\begin{equation*}
\begin{split}
 & \lambda_{i+1}(t_{i+1})\gamma(t_{i+1})-\lambda_{i+1}(t_{i})\gamma(t_{i})-\int_{t_{i}}^{t_{i+1}} \!\!\![\,\lambda_{i+1}'(s)\,\gamma(s)+ H(\lambda_{i+1}(s))\,]\,ds .
\end{split}
\end{equation*}
Then, for all $\mathcal C$ open set on $D[0,T]$, minimizing over the time-partition and over  functions $\lambda_1,\dots,\lambda_n$, we have
\begin{equation*}
\begin{split}
&\varlimsup_{k\to\infty}\frac{1}{k}\log\mathbb{P}_k\Big[X_k\in \mathcal C\Big]
\leq-\sup_{\{t_i\}_i}\sup_{\lambda_1}\cdots\sup_{\lambda_n}\,\inf_{\gamma\in \mathcal C}\, \sum_{i=0}^{n-1}J^{i+1}_{\lambda_{i+1}}(\gamma).
\end{split}
\end{equation*}
Since $J^{i+1}_{\lambda_{i+1}}(\gamma)$ is continuous on $\gamma$,  using Lemma 3.3 (Minimax Lemma) in Appendix 2 of \cite{KL}, we can interchanged the supremum and infimum above. And, then, we obtain, for all $\mathcal K$ compact set
\begin{equation}\label{ineq1}
\begin{split}
&\varlimsup_{k\to\infty}\frac{1}{k}\log\mathbb{P}_k\Big[X_k\in \mathcal K\Big]
\leq-\inf_{\gamma\in \mathcal K} \,\sup_{\{t_i\}_i}\,I_{\{t_i\}}(\gamma),
\end{split}
\end{equation}
where $I_{\{t_i\}}(\gamma)=\sup_{\lambda_1}\cdots\sup_{\lambda_n}\,\sum_{i=0}^{n-1}J^{i+1}_{\lambda_{i+1}}(\gamma).$ Define $I(\gamma)=\sup_{\{t_i\}_i}\,I_{\{t_i\}}(\gamma)$. Notice that
\begin{equation*}
\begin{split}
\sup_{\lambda_1}\cdots\sup_{\lambda_n}\,\sum_{i=0}^{n-1}J^{i+1}_{\lambda_{i+1}}(\gamma)
=&\,\sup_{\lambda_1}J^{1}_{\lambda_{1}}(\gamma)+\cdots+\sup_{\lambda_n}J^{n}_{\lambda_{n}}(\gamma)\\
\geq&\,\sup_{\lambda\in \mathbb{R}}J^{1}_{\lambda}(\gamma)+\cdots+\sup_{\lambda \in \mathbb{R}}J^{n}_{\lambda}(\gamma)\,=\,\sum_{i=0}^{n-1} \sup_{\lambda\in \mathbb{R}}J^{i}_{\lambda}(\gamma).
\end{split}
\end{equation*}

If $\gamma\in\mathcal{AC}[0,T]$, then
\begin{equation*}
\begin{split}
J^{i}_{\lambda}(\gamma)
 &=\,(t_{i+1}-t_{i}) \,\Big\{\,\lambda\,\frac{1}{t_{i+1}-t_{i}}\int_{t_{i}}^{t_{i+1}} \!\!\!\!\gamma'(s)\,ds\,- \,\,H(\lambda) \Big\}.
\end{split}
\end{equation*}

Thus,
\begin{equation*}
\begin{split}
I_{\{t_i\}_i}(\gamma)\geq&\sum_{i=0}^{n-1}\,(t_{i+1}-t_{i}) \,\sup_{\lambda\in \mathbb{R}}\,\Big\{\lambda\,\frac{1}{t_{i+1}-t_{i}}\int_{t_{i}}^{t_{i+1}} \!\!\!\!\gamma'(s)\,ds\,- \,\,H(\lambda)\Big\}\\
&=\sum_{i=0}^{n-1}\,(t_{i+1}-t_{i}) \,L\Big(\frac{1}{t_{i+1}-t_{i}}\int_{t_{i}}^{t_{i+1}} \!\!\!\!\gamma'(s)\,ds\Big).\\
\end{split}
\end{equation*}
The last equality is true, because $L(v)=\sup_{\lambda\in\mathbb{R}}\{v\lambda-H(\lambda)\}$, see \eqref{func_melhor}.
Putting it on the definition of $I(\gamma)$, we have
\begin{equation}\label{(7)}
\begin{split}
I(\gamma)&=\sup_{\{t_i\}_i}\,I_{\{t_i\}_i}(\gamma)\\
&\geq  \sup_{\{t_i\}_i}\,\, \sum_{i=0}^{n-1}\,(t_{i+1}-t_{i}) \,L\Big(\frac{1}{t_{i+1}-t_{i}}\int_{t_{i}}^{t_{i+1}} \!\!\!\!\gamma'(s)\,ds\Big)\\
&\geq\int_0^TL(\gamma'(s))\,ds=I_T(\gamma),
\end{split}
\end{equation}
as on \eqref{funcional} or on \eqref{func_melhor}.

Now, consider the case where $\gamma\notin\mathcal{AC}[0,T]$, then
there is $\varepsilon>0$ such that for all $\delta>0$ there is a  family of intervals $\{(s_i,t_i)\}_{i=1}^{n}$ on $[0,T]$, with $\sum_{i=1}^{n} t_i-s_i<\delta$, but
$\sum_{i=1}^{n} \gamma(t_i)-\gamma(s_i)>\varepsilon $. Thus, taking the time-partition of $[0,T]$ as
$t'_0=0<t'_1<\dots<t'_{2n}<t'_{2n+1}=T$, over the points $s_i, t_i$, we get
\begin{equation*}
\begin{split}
 \sum_{j=1}^{2n} J_{\lambda}^j(\gamma)
 &=\lambda \sum_{j=1}^{2n} \gamma(t'_j)-\gamma(t'_{j-1})\,-\, H(\lambda)\sum_{j=1}^{2n} t'_j-t'_{j-1}\\
  &=\lambda \sum_{i=1}^{n} \gamma(t_i)-\gamma(s_i)\,-\,  H(\lambda)\sum_{i=1}^{n} t_i-s_i\\
&  \geq \lambda \varepsilon \,-\,H(\lambda)\delta.
\end{split}
\end{equation*}
Then, $$I(\gamma)\geq \lambda \varepsilon \,-\,H(\lambda)\delta,$$
for all $\delta>0$ and for all $\lambda\in \mathbb{R}$. Thus, $I(\gamma)\geq \lambda \varepsilon $, for all $\lambda\in \mathbb{R}$.  Remember that $\varepsilon $ is fixed and we take $\lambda\to \infty$. Therefore, $I(\gamma)=\infty$, for $\gamma\notin\mathcal{AC}[0,T]$.
Then, $I(\gamma)=I_{T}(\gamma)$ as on \eqref{funcional} or on \eqref{func_melhor}.

In conclusion, we have obtained,  by inequalities \eqref{ineq1}, \eqref{(7)}  and definition of $I(\gamma)$, that
\begin{equation*}
\begin{split}
&\varlimsup_{k\to\infty}\frac{1}{k}\log\mathbb{P}_k\Big[X_k\in \mathcal K\Big]
\leq-\inf_{\gamma\in \mathcal K} \,I_{T}(\gamma),
\end{split}
\end{equation*}
where $I_T$ was defined on \eqref{funcional} or on \eqref{func_melhor}.

\subsection{Upper bound for closed sets}\label{subsec2.3}
To extend the upper bound for closed sets we need to use a standard argument, which is to prove that the sequence of measures $\{\mathbb{P}_k\}_k$ is exponentially tight, see Proposition 4.3.2 on \cite{A} or on Section 1.2 of \cite{OV}.
By exponentially tight we understood that there is a sequence of compact sets $\{\mathcal{K}_j\}_j$ in $D[0,T]$ such that
\begin{equation*}
\begin{split}
&\varlimsup_{k\to\infty}\frac{1}{k}\log\mathbb{P}_k\Big[X_k\in \mathcal K_j\Big]
\leq-j,
\end{split}
\end{equation*}
for all $j\in \mathbb N$.

Then this section is concerned about exponential tightness. First of all, as in Section 4.3 on \cite{A} or in Section 10.4 on \cite{KL}, we also claim that the exponential tightness is just a consequence of the lemma below,

\begin{lemma}\label{l1}
 For every $\varepsilon >0$,
\begin{equation*}
 \varlimsup_{\delta\downarrow 0}\varlimsup_{k\to\infty}\frac{1}{k}\log \mathbb  P_{k}
\Big[\sup_{|t-s|\leq \delta}|X_k(t)-X_k(s)|>\varepsilon \Big]\,=\,\infty\,.
\end{equation*}
\end{lemma}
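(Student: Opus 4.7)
The plan is to reduce the modulus of continuity estimate to tail bounds on increments over small deterministic intervals, and then apply Doob's maximal inequality to the exponential martingale of Subsection~\ref{subsec2.1} with a constant coefficient $\lambda$. (The displayed right-hand side should presumably read $-\infty$, since the log-probability is nonpositive.)

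First I would fix $\delta>0$ and partition $[0,T]$ into $n_\delta=\lceil T/\delta\rceil$ intervals $I_i=[i\delta,(i+1)\delta\wedge T]$. Any two times $s,t$ with $|t-s|\leq\delta$ lie in the union of two consecutive intervals, so inserting an intermediate partition point and applying the triangle inequality yields the deterministic bound
$$\sup_{|t-s|\leq\delta}|X_k(t)-X_k(s)|\;\leq\;3\,\max_{0\leq i<n_\delta}\,\sup_{u\in I_i}|X_k(u)-X_k(i\delta)|.$$
Hence by a union bound it is enough to show, uniformly in $s\in[0,T]$, the one-interval estimate
$$\mathbb{P}_k\Big[\sup_{u\in[0,\delta]}|X_k(s+u)-X_k(s)|>\varepsilon/3\Big]\;\leq\;2\exp\bigl\{-k\lambda\varepsilon/3+k\delta\,H(\lambda)\bigr\}$$
for every $\lambda>0$, with $H(\lambda)=e^\lambda+e^{-\lambda}-2$ as in Lemma~\ref{Legendre}.

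Second, this tail bound follows by applying Doob's maximal inequality to the positive $\mathbb{P}_k$-martingale from \eqref{martingale1} with constant coefficient $\lambda$ and base time $s$:
$$M^k_u\;=\;\exp\bigl\{k\lambda[X_k(s+u)-X_k(s)]-ku\,H(\lambda)\bigr\},\qquad \mathbb{E}_k[M^k_u]=1.$$
Since $H(\lambda)\geq 0$, on the event that $X_k(s+u)-X_k(s)\geq\varepsilon/3$ for some $u\leq\delta$ one has $M^k_u\geq\exp\{k\lambda\varepsilon/3-k\delta H(\lambda)\}$; Doob then gives the one-sided version of the above estimate, and the opposite direction follows by the same argument with $-\lambda$, using that $H$ is even. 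The distinction between distance on $\mathbb S^1$ and distance of a real lift is harmless, because circle distance is bounded by the distance of any lift.

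Third, combining the union bound with the martingale estimate and taking $\tfrac{1}{k}\log$ yields
$$\frac{1}{k}\log\mathbb{P}_k\Big[\sup_{|t-s|\leq\delta}|X_k(t)-X_k(s)|>\varepsilon\Big]\;\leq\;\frac{1}{k}\log(2n_\delta)-\frac{\lambda\varepsilon}{3}+\delta\,H(\lambda).$$
Sending $k\to\infty$ kills the combinatorial term $k^{-1}\log(2n_\delta)$; sending $\delta\downarrow 0$ kills $\delta H(\lambda)$; and finally sending $\lambda\to\infty$ drives the bound to $-\infty$, which is the claim. There is no genuine obstacle: the probabilistic content is entirely absorbed by the exponential martingale constructed in Subsection~\ref{subsec2.1}, and the deterministic reduction in the first step is routine.
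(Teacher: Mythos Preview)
Your proposal is correct and follows essentially the same approach as the paper: reduce the modulus-of-continuity event to a union of single-interval oscillation events, then control each by applying Doob's maximal inequality to the exponential martingale of Subsection~\ref{subsec2.1} with a constant coefficient, and finally optimize over $\lambda$. The paper's version is cosmetically different---it uses half-open partition intervals (hence $\varepsilon/4$ ``due to the presence of jumps'' rather than your $\varepsilon/3$ with closed intervals) and splits $c\lambda(X_k(t)-X_k(t_0))$ into $\tfrac{1}{k}\log(M^k_t/M^k_{t_0})$ plus a deterministic drift before bounding each piece---but your direct estimate (observing that $M^k_u$ itself is large on the upcrossing event) is a cleaner packaging of the same computation.
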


\begin{proof}
Firstly, notice that
\begin{equation*}
\begin{split}
&\Big\{\sup_{|t-s|\leq \delta}|\gamma(t)-\gamma(s)|>\varepsilon \Big\}\\
& \subset\bigcup_{k=0}^{\lfloor T\delta^{-1}\rfloor}\Big\{\sup_{k\delta\leq t< (k+1)\delta}
|\gamma(t)-\gamma(k\delta)|>\frac{\varepsilon }{4}\Big\}\,.\\
\end{split}
\end{equation*}
We have here $\pfrac{\varepsilon }{4}$ instead of $\pfrac{\varepsilon }{3}$ due to the presence of jumps.
Using the useful fact, for any sequence of real numbers $a_N,b_N$, we have
\begin{equation}\label{limsup}
 \varlimsup_{N\to\infty}\pfrac{1}{N}\log(a_N+b_N)=
 \max \Big\{\varlimsup_{N\to\infty}\pfrac{1}{N}\log(a_N),\varlimsup_{N\to\infty}\pfrac{1}{N}\log(b_N)\Big\}\,,
\end{equation}
in order to prove this lemma, it is enough to show that
\begin{equation}\label{l2}
 \varlimsup_{\delta\downarrow 0}\varlimsup_{k\to\infty}\pfrac{1}{k}\log \mathbb  P_{k}\Big[\sup_{t_0\leq t\leq t_0+\delta}
|X_k(t)-X_k(t_0)|>\varepsilon \Big]\,=\,\infty\,,
\end{equation}
for every $\varepsilon >0$ and for all $t_0\geq 0$. Let be $ M^{k}_t$ the martingale defined in \eqref{martingale1} with the function $\lambda$ constant, using the expression \eqref{martingale2} for $ M^{k}_t$ and the fact that $\lambda$ is constant,  we have that
\begin{equation*}
\begin{split}
 M^{k}_t\,=\, \exp{\Big\{k\big[c\lambda\,(X_k(t)-X_k(0))\,-\,t\,H(c\lambda)\big]\Big\}}
\end{split}
\end{equation*}
is a positive martingale equal to $1$ at time $0$. The constant $c$ above will be chosen \textit{a posteriori} as enough large.
In order to obtain \eqref{l2} is sufficient to get  the limits
\begin{equation}\label{l3}
 \varlimsup_{\delta\downarrow 0}\varlimsup_{k\to\infty}\pfrac{1}{k}\log \mathbb  P_{k}\Big[\sup_{t_0\leq t\leq t_0+\delta}\Big|
\pfrac{1}{k}\log \Big(\pfrac{M^{k}_t}{M^{k}_{t_0}}\Big) \Big|>c\lambda\,\varepsilon \Big]\,=\,-\infty\
\end{equation}
and
\begin{equation}\label{l4}
 \varlimsup_{\delta\downarrow 0}\varlimsup_{k\to\infty}\pfrac{1}{k}\log \mathbb  P_{k}\Big[\sup_{t_0\leq t\leq t_0+\delta}\Big|
(t-t_0)\, H(c\lambda) \Big|>c\lambda\varepsilon \Big]=-\infty\,.
\end{equation}
The second probability is considered for a  deterministic set, and
 by   boundedness,  we conclude that for $\delta$ enough small the probability
in \eqref{l4} vanishes.

On the other hand, to prove \eqref{l3}, we observe that we can neglect the absolute value, since
\begin{equation}\label{mod}
\begin{split}
&\mathbb  P_{k}\Big[\sup_{t_0\leq t\leq t_0+\delta}\Big|
\pfrac{1}{k}\log \Big(\pfrac{M^{k}_t}{M^{k}_{t_0}}\Big) \Big|>c\lambda\,\varepsilon \Big]\\
& \leq\mathbb  P_{k}\Big[\sup_{t_0\leq t\leq t_0+\delta}
\pfrac{1}{k}\log \Big(\pfrac{M^{k}_t}{M^{k}_{t_0}}\Big) >c\lambda\,\varepsilon \Big]+
 \mathbb  P_{k}\Big[\sup_{t_0\leq t\leq t_0+\delta}
\pfrac{1}{k}\log \Big(\pfrac{M^{k}_t}{M^{k}_{t_0}}\Big) <-c\lambda\,\varepsilon \Big]
\end{split}
\end{equation}
and using again \eqref{limsup}. Because $\{M^{k}_t/M^{k}_{t_0};\,t\geq t_0\}$ is a mean one positive martingale, we can apply Doob's Inequality,
which yields
\begin{equation*}
\mathbb  P_{k}\Big[\sup_{t_0\leq t\leq t_0+\delta}
\pfrac{1}{k}\log \Big(\pfrac{M^{k}_t}{M^{k}_{t_0}}\Big) >c\lambda\,\varepsilon \Big]
\,=\,
\mathbb  P_{k}\Big[\sup_{t_0\leq t\leq t_0+\delta}
 \Big(\pfrac{M^{k}_t}{M^{k}_{t_0}}\Big) >e^{c\lambda\,\varepsilon\, k }\Big]
 \,\leq\,\frac{1}{e^{c\lambda\varepsilon k}}\,.
\end{equation*}

 Passing the $\log$ function and dividing by $k$, we get
\begin{equation}\label{bound111}
 \varlimsup_{\delta\downarrow 0}\varlimsup_{k\to\infty}\pfrac{1}{k}\log
\mathbb  P_{k}\Big[\sup_{t_0\leq t\leq t_0+\delta}
\pfrac{1}{k}\log \Big(\pfrac{M^{k}_t}{M^{k}_{t_0}}\Big) >\lambda\,\varepsilon \Big]\leq -c\lambda\,\varepsilon ,
\end{equation}
for all $c>0$.
To treat of the second term on \eqref{mod}, we just need to observe that $\{M^{k}_{t_0}/M^{k}_{t};\,t\geq t_0\}$ is also a martingale and rewriting
$$\mathbb  P_{k}\Big[\sup_{t_0\leq t\leq t_0+\delta}
\pfrac{1}{k}\log \Big(\pfrac{M^{k}_t}{M^{k}_{t_0}}\Big) <-c\lambda\,\varepsilon \Big]$$
as
$$\mathbb  P_{k}\Big[\sup_{t_0\leq t\leq t_0+\delta}
\pfrac{1}{k}\log \Big(\pfrac{M^{k}_{t_0}}{M^{k}_{t}}\Big) >c\lambda\,\varepsilon \Big].$$
Then, we get the same bound for this probability  as in \eqref{bound111}, it finishes the proof.
\end{proof}

\subsection{Lower bound}\label{subsec2.4}
Let $\gamma:[0,T]\to\mathbb{S}^{1}$ be a function such that $\gamma(0)=x_0$ and for a $\delta>0$, in the following $$B_\infty(\gamma,\delta)=\Big\{f:[0,T]\to\mathbb{S}^{1}:\, \sup_{0\leq t\leq T}|f(t)-\gamma(t)|<\delta\Big\}.$$

Let $\mathcal O$ be a open set of $D[0,T]$. For all $\gamma \in\mathcal O$,
our goal is prove that
\begin{equation}\label{**}
\varliminf_{k\to\infty}\frac{1}{k}\log\mathbb P_k[X_k\in\mathcal O]\geq -I_T(\gamma).
\end{equation}
For that, we can suppose $\gamma\in\mathcal{AC}[0,T]$, because if $\gamma\notin\mathcal{AC}[0,T]$, then $I_T(\gamma)=infty$ and \eqref{**} is trivial.
Since $\gamma \in\mathcal O$, there is a $\delta>0$ such that
$$\mathbb{P}_k\Big[X_k\in \mathcal O\Big]\geq \mathbb{P}_k\Big[X_k\in B_\infty(\gamma,\delta)\Big].$$
We need consider the measure $\mathbb{P}_k^\lambda$ with $\lambda:[0,T]\to\mathbb R$, the function  $\lambda(s)=\lambda_\gamma(s)=\log\Big(\pfrac{1}{2}\Big(\gamma'(s)+\sqrt{(\gamma'(s))^2+4}\Big)\Big)$, which we obtain in the Lemma \ref{Legendre}, as a function that attains the supremum $\sup_\lambda[ \lambda\, \gamma'(s)-H(\lambda)] $ for each $s$.
 Thus,
\begin{equation*}
\begin{split}
&\mathbb{P}_k\Big[X_k\in B_\infty(\gamma,\delta)\Big]
=\mathbb{E}_k^{\lambda}
\Big[\mathbf{1}_{B_\infty(\gamma,\delta)}(X_k^ {\lambda})\frac{d\mathbb{P}_k}{d\mathbb{P}_k^{\lambda}}\Big]=\mathbb{E}_k^{\lambda}\Big[
\mathbf{1}_{B_\infty(\gamma,\delta)}(X_k^{\lambda})(M^k_T)^{-1}\Big]\\
&=\mathbb{E}_k^{\lambda}
\Bigg[\mathbf{1}_{B_\infty(\gamma,\delta)}(X_k^{\lambda})\,\,\exp\Big\{k\,\Big[\lambda(T)X_k(T)-\lambda(0)X_k(0)\\&\qquad\qquad\qquad\qquad\qquad\qquad
-\int_{0}^{T} \![\,\lambda'(s)\,X_k(s)+ H(\lambda(s))\,]\,ds \,\Big]\Big\}\Bigg].\\
\end{split}
\end{equation*}
The last equality follows from Remark \ref{lambda_dif}.
Define the measure $\mathbb{P}_{k,\delta}^{\lambda,\gamma}$ as
\begin{equation}\label{measure}
\begin{split}
&\mathbb{E}_{k,\delta}^{\lambda,\gamma}
\Big[f(X_k^{\lambda})\Big]=
\frac{\mathbb{E}_k^{\lambda}
\Big[\mathbf{1}_{B_\infty(\gamma,\delta)}(X_k^{\lambda})f(X_k^{\lambda})\Big]}{\mathbb{P}_k^{\lambda}[X_k^{\lambda}\in B_\infty(\gamma,\delta)]},
\end{split}
\end{equation}
for all bounded function $f:D[0,T]\to\mathbb R$.
Then,
\begin{equation*}
\begin{split}
&\mathbb{P}_k\Big[X_k\in B_\infty(\gamma,\delta)\Big]\\
&=\mathbb{E}_{k,\delta}^{\lambda,\gamma}
\Big[\exp\Big\{-k\,\big[\,\lambda(T)\,X_k^{\lambda}(T)-\lambda(0)\,X_k^{\lambda}(0))\,-\int_{0}^{T} \!\lambda'(s)\,X_k(s)\,ds \big]\Big\}\Big] \\
&\qquad\qquad\cdot\,e^{ k\int_{0}^{T} \! H(\lambda(s))\,ds }\,\,
\mathbb{P}_k^{\lambda}\Big[X_k^{\lambda}\in B_\infty(\gamma,\delta)\Big].\\
\end{split}
\end{equation*}
Then, using Jensen's inequality
\begin{equation*}
\begin{split}
&\frac{1}{k}\log\mathbb{P}_k\Big[X_k\in \mathcal O\Big]\\
&\geq - \,\mathbb{E}_{k,\delta}^{\lambda,\gamma}
\Big[\lambda(T)\,X_k^{\lambda}(T)-\lambda(0)\,X_k^{\lambda}(0))\,-\int_{0}^{T} \!\lambda'(s)\,X_k(s)\,ds \Big]\\&\qquad\qquad\qquad+\,\int_{0}^{T} \! H(\lambda(s))\,ds+\frac{1}{k}\log\mathbb{P}_k^{\lambda}\Big[X_k^{\lambda}\in B_\infty(\gamma,\delta)\Big]\\
&\geq- \,C(\lambda)\mathbb{E}_{k,\delta}^{\lambda,\gamma}
\Big[|X_k^{\lambda}(T)-\gamma(T)|+|X_k^{\lambda}(0))-\gamma(0)|+\!\int_{0}^{T} \!\!|X_k(s)-\gamma(s)|\,ds \Big]\\&
\qquad\qquad\qquad-\Big(\lambda(T)\,\gamma(T)-\lambda(0)\,\gamma(0))\,-\int_{0}^{T} \![\lambda'(s)\,\gamma(s)+H(\lambda(s))]\,ds\Big)\\&\qquad\qquad\qquad+\frac{1}{k}\log\mathbb{P}_k^{\lambda}\Big[X_k^{\lambda}\in B_\infty(\gamma,\delta)\Big].\\
\end{split}
\end{equation*}
Since $\gamma:[0,T]\to\mathbb{R}$ is  an absolutely continuous function, we can write
\begin{equation*}
\begin{split}
&\lambda(T)\,\gamma(T)-\lambda(0)\,\gamma(0))\,-\int_{0}^{T} \![\lambda'(s)\,\gamma(s)+H(\lambda(s))]\,ds\\
&=
\int_{0}^{T} \![\lambda(s)\,\gamma'(s)+H(\lambda(s))]\,ds.
\end{split}
\end{equation*}
Since $\lambda(s)=\lambda_\gamma(s)=\log\Big(\pfrac{1}{2}\Big(\gamma'(s)+\sqrt{(\gamma'(s))^2+4}\Big)\Big)$, by Lemma \ref{Legendre},  we obtain
\begin{equation*}
\begin{split}
&\int_{0}^{T} \![\lambda(s)\,\gamma'(s)+H(\lambda(s))]\,ds=\int_0^T\sup_\lambda[ \lambda\, \gamma'(s)-H(\lambda)]\,ds=\int_0^T L(\gamma'(s))\,ds,
\end{split}
\end{equation*}
and, by \eqref{func_melhor}, the last expression is equal to $I_{T}(\gamma)$.
Thus,
\begin{equation}\label{eqq00}
\begin{split}
&\frac{1}{k}\log\mathbb{P}_k\Big[X_k\in \mathcal O\Big]\geq -I_{T}(\,\gamma)+\frac{1}{k}\log\frac{3}{4} -C(\lambda)\delta.
\end{split}
\end{equation}
The last inequality follows from the above and the Lemma \ref{mart2} and the Lemma \ref{bola} below.

\begin{lemma}\label{mart2} With respect the measure defined on \eqref{measure}, there exists a constant $C>0$ such that
\begin{equation*}
-\mathbb{E}_{k,\delta}^{\lambda,\gamma}
\Big[|X_k^{\lambda}(T)-\gamma(T)|+|X_k^{\lambda}(0))-\gamma(0)|+\!\int_{0}^{T} \!\!|X_k(s)-\gamma(s)|\,ds \Big]\geq -C\delta.
\end{equation*}
\end{lemma}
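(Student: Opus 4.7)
The claim is essentially a tautology given the definition of the conditional measure $\mathbb{P}_{k,\delta}^{\lambda,\gamma}$, and my plan is to exploit this directly rather than resort to any probabilistic estimate.

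First, I would observe that $\mathbb{P}_{k,\delta}^{\lambda,\gamma}$ is, by formula \eqref{measure}, precisely the conditional distribution of the process under $\mathbb{P}_k^\lambda$ given the event $\{X_k^\lambda \in B_\infty(\gamma,\delta)\}$. Consequently, $\mathbb{P}_{k,\delta}^{\lambda,\gamma}$-almost every trajectory $\omega$ lies in $B_\infty(\gamma,\delta)$, that is,
\begin{equation*}
\sup_{0\leq t\leq T} |X_k^\lambda(t,\omega) - \gamma(t)| < \delta.
\end{equation*}
In particular, evaluating at $t=0$ and $t=T$ gives $|X_k^\lambda(T)-\gamma(T)|<\delta$ and $|X_k^\lambda(0)-\gamma(0)|<\delta$ pointwise, and integrating over $[0,T]$ yields $\int_0^T |X_k^\lambda(s)-\gamma(s)|\,ds < T\delta$ pointwise.

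Adding the three pointwise estimates, the random variable inside the expectation is bounded above by $(T+2)\delta$ almost surely with respect to $\mathbb{P}_{k,\delta}^{\lambda,\gamma}$. Monotonicity of the expectation then gives
\begin{equation*}
\mathbb{E}_{k,\delta}^{\lambda,\gamma}\Big[|X_k^{\lambda}(T)-\gamma(T)|+|X_k^{\lambda}(0)-\gamma(0)|+\int_{0}^{T}|X_k(s)-\gamma(s)|\,ds \Big] \leq (T+2)\delta,
\end{equation*}
so after multiplying by $-1$ we obtain the desired inequality with $C=T+2$. The only subtle point is the slight notational mismatch between $X_k$ in the integrand and $X_k^\lambda$ in the boundary terms of the statement; I would interpret both as the coordinate process on the Skorohod space, whose trajectory is fixed once we condition on $B_\infty(\gamma,\delta)$, so the estimate applies uniformly. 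There is no real obstacle here — the lemma is an essentially deterministic consequence of the supremum norm defining $B_\infty(\gamma,\delta)$, and the constant $C$ depends only on $T$.
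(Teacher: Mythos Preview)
Your proof is correct and follows essentially the same approach as the paper: both exploit that under $\mathbb{P}_{k,\delta}^{\lambda,\gamma}$ the trajectory lies in $B_\infty(\gamma,\delta)$, so each of the three terms is pointwise bounded by $\delta$, $\delta$, and $T\delta$ respectively, yielding the constant $C=T+2$. The paper carries this out by writing the conditional expectation as a ratio and cancelling, while you invoke the conditional interpretation directly, but the content is identical.
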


\begin{lemma}\label{bola} There is a $k_0=k_0(\gamma,\delta)$ such that
$\mathbb{P}_k^{\lambda}[X_k^{\lambda}\in B_\infty(\gamma,\delta)]>\frac{3}{4}$,
 for all $k\geq k_0$.
\end{lemma}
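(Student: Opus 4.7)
The plan is to show that, under the tilted law $\mathbb{P}_k^\lambda$, the process $X_k^\lambda$ has mean trajectory exactly equal to $\gamma$, with fluctuations of size $O(1/\sqrt{k})$, and then invoke Doob's inequality. First, I would lift both $\gamma$ and $X_k^\lambda$ to continuous/\emph{c\`adl\`ag} paths on $\mathbb{R}$ so that the supremum distance $|X_k^\lambda(t)-\gamma(t)|$ can be manipulated by ordinary real arithmetic (this is harmless since for small $\delta$ and $k$ large the process cannot wrap around the circle within time $T$). Under $\mathbb{P}_k^\lambda$, the generator $k\mathcal L_k^{\lambda(t)}$ makes $X_k^\lambda$ jump from $j/k$ to $(j\pm 1)/k$ at rates $k e^{\pm\lambda(t)}$, and by Dynkin's formula the process
\[
N_t \;:=\; X_k^\lambda(t)-X_k^\lambda(0)-\int_0^t\bigl(e^{\lambda(s)}-e^{-\lambda(s)}\bigr)\,ds
\]
is a mean-zero $\mathbb{P}_k^\lambda$-martingale.

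The key algebraic step is to use the particular form of $\lambda$. With $\lambda(s)=\log\bigl((\gamma'(s)+\sqrt{\gamma'(s)^2+4})/2\bigr)$ one checks directly that
\[
e^{\lambda(s)}-e^{-\lambda(s)}=\gamma'(s),\qquad e^{\lambda(s)}+e^{-\lambda(s)}=\sqrt{\gamma'(s)^2+4},
\]
the same identities that drove Lemma \ref{Legendre}. Hence $\int_0^t(e^{\lambda(s)}-e^{-\lambda(s)})\,ds=\gamma(t)-\gamma(0)$ and, since $|X_k^\lambda(0)-\gamma(0)|=|x_k(x_0)-x_0|\le 1/k$, we obtain the pointwise bound
\[
|X_k^\lambda(t)-\gamma(t)|\;\le\;\tfrac{1}{k}+|N_t|.
\]

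Now I would compute the predictable quadratic variation of $N$. Each jump has size $1/k$ and the total jump rate is $k(e^{\lambda(s)}+e^{-\lambda(s)})$, so
\[
\langle N\rangle_T \;=\;\frac{1}{k}\int_0^T\sqrt{\gamma'(s)^2+4}\,ds\;=:\;\frac{C(\gamma)}{k},
\]
with $C(\gamma)<\infty$ because $\gamma\in\mathcal{AC}[0,T]$ implies $\gamma'\in L^1[0,T]$ and $\sqrt{v^2+4}\le|v|+2$. Doob's $L^2$ maximal inequality then yields
\[
\mathbb{P}_k^\lambda\Bigl[\sup_{0\le t\le T}|N_t|\ge\delta/2\Bigr]\;\le\;\frac{4\,\mathbb{E}_k^\lambda[N_T^2]}{(\delta/2)^2}\;=\;\frac{16\,C(\gamma)}{k\,\delta^2}.
\]
Choosing $k_0=k_0(\gamma,\delta)$ large enough that both $1/k_0<\delta/2$ and $16C(\gamma)/(k_0\delta^2)<1/4$ hold, we conclude that for $k\ge k_0$,
\[
\mathbb{P}_k^\lambda\bigl[X_k^\lambda\in B_\infty(\gamma,\delta)\bigr]\;\ge\;1-\mathbb{P}_k^\lambda\Bigl[\sup_{t\le T}|N_t|\ge\delta/2\Bigr]\;>\;\tfrac{3}{4}.
\]

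The main obstacle I anticipate is not any of the estimates above but the bookkeeping needed to justify lifting the circle-valued process to the real line so that the Dynkin identity and the inequality $|X_k^\lambda(t)-\gamma(t)|\le 1/k+|N_t|$ make sense without ambiguity. This is a routine but slightly delicate point because $\gamma$ is only absolutely continuous; one should produce a continuous lift $\tilde\gamma$ of $\gamma$ and a compatible lift of $X_k^\lambda$ (tracking jump directions), noting that on the event $\{\sup_t|N_t|<\delta/2\}$ the lifted process stays within $\delta$ of $\tilde\gamma$, so no winding ambiguity occurs and the bound descends to the circle.
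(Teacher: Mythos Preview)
Your proposal is correct and follows essentially the same route as the paper: the martingale $N_t$ is exactly the paper's $\mathcal{M}^k_t$, the identity $e^{\lambda(s)}-e^{-\lambda(s)}=\gamma'(s)$ is used in the same way to identify the drift with $\gamma$, and the conclusion comes from Doob's $L^2$ inequality applied to the quadratic variation $\langle N\rangle_T=\tfrac{1}{k}\int_0^T(e^{\lambda}+e^{-\lambda})\,ds$. Your version is in fact a bit more careful than the paper's, since you justify the finiteness of $\int_0^T\sqrt{(\gamma')^2+4}\,ds$ via $\sqrt{v^2+4}\le|v|+2$ and $\gamma'\in L^1$, and you flag the circle-lifting issue, both of which the paper passes over in silence.
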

The proofs of Lemma \ref{mart2} and Lemma \ref{bola} are in the end of this subsection.\\

Continuing with the analysis of \eqref{eqq00}, we mention that,
since, for all $\gamma\in\mathcal O$, there exists $\delta=\delta(\gamma)$, such that $B_\infty(\gamma,\delta)\subset\mathcal O$, then for all $\varepsilon<\delta$, we have
\begin{equation*}
\begin{split}
&\varliminf_{k\to\infty}\frac{1}{k}\log\mathbb{P}_k\Big[X_k\in \mathcal O\Big]\geq -I_{T}(\gamma) -\lambda\varepsilon.
\end{split}
\end{equation*}
Thus, for all $\gamma\in\mathcal O$, we have
\eqref{**}.
Therefore,
\begin{equation*}
\begin{split}
&\varliminf_{k\to\infty}\frac{1}{k}\log\mathbb{P}_k\Big[X_k\in \mathcal O\Big]\geq -\inf_{\gamma\in \mathcal O}I_{T}(\gamma) .
\end{split}
\end{equation*}

We present, now, the proofs of the Lemmata \ref{mart2} and \ref{bola}.

\begin{proof}[Proof of Lemma \ref{mart2}]
Recalling the definition of the probability measure $\mathbb{P}_{k,\delta}^{\lambda,\gamma}$, we can write
\begin{equation*}
\begin{split}
&-\mathbb{E}_{k,\delta}^{\lambda,\gamma}
\Big[|X_k^{\lambda}(T)-\gamma(T)|+|X_k^{\lambda}(0))-\gamma(0)|+\!\int_{0}^{T} \!\!|X_k(s)-\gamma(s)|\,ds \Big]\\&
=-\frac{\mathbb{E}_k^{\lambda}
\Big[\mathbf{1}_{B_\infty(\gamma,\delta)}\Big(|X_k^{\lambda}(T)-\gamma(T)|+|X_k^{\lambda}(0))-\gamma(0)|+\!\int_{0}^{T} \!\!|X_k(s)-\gamma(s)|\,ds \Big)\Big]}{\mathbb{P}_k^{\lambda}[X_k^{\lambda}\in B_\infty(\gamma,\delta)]}\\
&\geq-(2+T)\,\delta\,\frac{\mathbb{P}_k^{\lambda}
\Big[X_k^{\lambda}\in B_\infty(\gamma,\delta)\Big]}{\mathbb{P}_k^{\lambda}[X_k^{\lambda}\in B_\infty(\gamma,\delta)]}=\,-\,(2+T)\,\delta.\\
\end{split}
\end{equation*}
\end{proof}

\begin{proof}[Proof of Lemma \ref{bola}]
Consider the martingale
\begin{equation*}
\begin{split}
\mathcal{M}^k_t&=X_k^{\lambda}(t)-X_k^{\lambda}(0)-\int_0^t\!\! k\mathcal{L}_k^{\lambda} X_k^{\lambda}(s)\, ds\\
&=X_k^{\lambda}(t)-\pfrac{\lfloor kx_0\rfloor}{k}-\int_0^t\!\!\!\big(e^{\lambda(s)}-e^{-\lambda(s)}\big)\,ds,\\
\end{split}
\end{equation*}
remember that $\mathbb{P}_k$ has initial measure $\delta_{x_k(x_0)}$, where $x_k(x_0)=\frac{\lfloor kx_0\rfloor}{k}$.
Notice that, by the choose of $ \lambda(s)$ as $\log\Big(\pfrac{1}{2}\Big(\gamma'(s)+\sqrt{(\gamma'(s))^2+4}\Big)\Big)$ and hypothesis over $\gamma$, we have that
\begin{equation*}
\begin{split}
&\int_0^t\!\!\!\big(e^{\lambda(s)}-e^{-\lambda(s)}\big)\,ds=\int_0^t
\gamma'(s)\,ds=\gamma(t)-\gamma(0)=\gamma(t)-x_0.\\
\end{split}
\end{equation*}
Then,
$X_k^{\lambda}(t)-\gamma(t)=\mathcal{M}^k_t+r_k$,
 where $r_k=\frac{\lfloor kx_0\rfloor}{k}-x_0$.
Using  the  Doob's martingale inequality,
\begin{equation}\label{doob}
\begin{split}
\mathbb{P}_k^{\lambda}\Bigg[\sup_{0\leq t\leq T}|X_k^{\lambda}(t)-\gamma(t)|>\delta \Bigg]&\leq
\mathbb{P}_k^{\lambda}\Bigg[\sup_{0\leq t\leq T}|\mathcal{M}^k_t|>\delta/2\Bigg]+\mathbb{P}_k^{\lambda}\Bigg[|r_k|>\delta/2 \Bigg]
\\&\leq\frac{4}{\delta^2}\, \mathbb{E}_k^{\lambda}\Big[\big(\mathcal{M}^k_T\big)^2\Big]+\frac{1}{8},
\end{split}
\end{equation}
for $k$ large enough.
Using the fact that
\begin{equation*}
\begin{split}\mathbb{E}_k^{\lambda}\Big[\big(\mathcal{M}^k_T\big)^2\Big]
=&\mathbb{E}_k^{\lambda}\Big[\int_0^T[\,k\mathcal{L}_k^{\lambda} (X_k^{\lambda}(s))^2-2X_k^{\lambda}(s)k\mathcal{L}_k^{\lambda} (X_k^{\lambda}(s))\,]\, ds\Big].
\end{split}
\end{equation*}
And,
making same more calculations, we get that the expectation above is bounded from above by
\begin{equation*}
\begin{split}
&\mathbb{E}_k^{\lambda}\Bigg[k\int_0^Te^{\lambda(s)}\big((X_k^{\lambda}(s)+\pfrac{1}{k})-X_k^{\lambda}(s))\big)^2\,ds\Bigg]\\
&+\mathbb{E}_k^{\lambda}\Bigg[k\int_0^T
e^{-\lambda(s)}\big( (X_k^{\lambda}(s)-\pfrac{1}{k})-(X_k^{\lambda}(s))\big)^2\, ds\Bigg]\\
&=\int_0^T\frac{e^{\lambda(s)}+e^{-\lambda(s)}}{k}\,ds\leq C(\lambda,T)\frac{1}{k}.
\end{split}
\end{equation*}
Then there is $k_0$, such that,
$\mathbb{P}_k^{\lambda}[\sup_{0\leq t\leq T}|X_k^{\lambda}(t)-\gamma(t)|>\delta ]<1/4$, for all $k>k_0$.

\end{proof}
\medskip

\textit{This is the end of the first part of the paper where we investigate the deviation function on the Skorohod space when $k\to \infty$ for the trajectories of the unperturbed system.}

\section{Disturbing the system by a potential $V$.}\label{sec3}

Now, we introduce a fixed differentiable $C^2$  function $V:  \mathbb{S}^1 \to \mathbb{R}.$ We want to analyse large deviation properties associated to the  disturbed system by the potential $V$. Several of the properties we consider just assume that $V$ is Lipschitz, but we need some more regularity for Aubry-Mather theory.
Given $V: \mathbb{S}^1 \to \mathbb{R}$   we denote by $V_k$ the restriction of
$V$ to $\Gamma_k$.
It is known that if $kL_k$ is a $k$ by $k$ line sum zero matrix with strictly negative elements in the diagonal and non-negative elements outside the diagonal, then for any $t>0$, we have that $e^{t\,kL_k}$ is stochastic. The infinitesimal generator $kL_k$ generates a continuous time Markov Chain with values on $\Gamma_k=\{0,1/k, 2/k,...,\frac{k-1}{k}\}\subset \mathbb S^1$. We are going to disturb this stochastic semigroup by a potential $k\,V_k:\Gamma_k\to \mathbb{R}$ and we will derive another continuous Markov Chain (see \cite{BEL}  and \cite{LNT}) with values on $\Gamma_k$. This will be described below. We will identify the function $k\,V_k$ with the $k$ by $k$  diagonal matrix, also denoted by $k\,V_k$, with elements $k\,V_k(j/k)$, $j=0,1,2..,k-1$, in the diagonal.

\medskip

The continuous time Perron's Theorem (see \cite{S}, page 111) claims the following: given the matrix $ k\,L_k$ as above and the $k\,V_k$ diagonal matrix, then there exists
\begin{itemize}
\item[a)] a unique positive function $u_{V_k}=u_k : \{0,1/k,2/k,..,(k-1)/k\}\to \mathbb{R}$,
\item[b)] a unique probability vector  $\mu_{V_k}=\mu_k$  over the set $  \{0,1/k,2/k,..,(k-1)/k\}$, such that
$$ \sum_{j=1}^k
u_k^j \,\mu_k^j = 1 ,$$
where $u_k=(u_k^1,...,u_k^k)$, $\mu_k=(\mu_k^1,...,\mu_k^k)$
\item[c)]a real value $\lambda (V_k)=\lambda_k$,

\end{itemize}

such that

\begin{itemize}
\item[i)] for any  $v \in \mathbb{R}^n$, if we denote $ P^t_{k,V} =e^{t\,(k\,L_k + k\,V_k)}$, then
$$\lim_{t\to \infty}  e^{-t \lambda (k)} P^t_{k,V} (v)  = \,\sum_{j=1}^k
v_j \,\mu_k^j\, u_k^j\,, $$
\item[ii)] for any positive $s$
$$ e^{-s \lambda (k)}P^s_{k,V}(u_k)= u_k. $$
\end{itemize}

From ii) follows that
$$(k\,L_k + k\,V_k) (u_k) = \lambda (k) u_k.$$

The semigroup $e^{t\, (k\, L_k + k\,V_k - \lambda(k))}$  defines a continuous time Markov chain with values on $ \Gamma_k$, where the vector $\pi_{k,V}=(\pi_{k,V}^1,...,\pi_{k,V}^k)$, such that $\pi_{k,V}^j=\,u_k^j\, \mu_k^j\, \,$, $j=1,2,..,k$, is stationary.
Notice that $\pi_k=\pi_{k,V}$, when $V=0$.
Remember that the $V_k$ was obtained by discretization of the initial $V:\mathbb S^1\to \mathbb{R}.$

\medskip

\begin{example} When $k=4$ and $V_4$ is defined by the values $V_4^j$, $j=1,2,3,4$, then, we have first to find the left eigenvector
$u_{V_4}$ for the eigenvalue $\lambda(V_4)$, that is to solve the equation

$$ u_{V_4}\, (4L_4 +4V_4)= u_{V_4} 4
\left(
\begin{array}{cccc}
-2  + V_4^1 & 1 & 0 & 1 \\
1 & -2  + V_4^2 & 1 & 0 \\
0 & 1 & -2+ V_4^3 & 1 \\
1 & 0 & 1 & -2 + V_4^4\\
\end{array}\right)=
 \lambda(V_4)\, u_{V_4}.
$$

Suppose $\mu_{V_4}$ is the right normalized eigenvector. In this  way we can get by the last theorem a stationary vector $\pi_{4,V}$ for
stationary Gibbs probability associated to the potential $V_4$ We point out that by numeric methods one can get good approximations of the
solution of the above problem.

\end{example}

\bigskip

From the end of Section 5 in \cite{S}, we have that

$$ \lambda_k = \sup_{ \psi \in \mathbb{ L}^2, \, ||\psi||_2=1}\Big\{
\int_{\Gamma_k} \psi (x)\,  [(k L_k + k V_k) ( \psi )\,
] (x) \, d \pi_{k} (x)\Big\},$$ where $\psi:\Gamma_k \to
\mathbb{R}$,  $$||\psi||_2= \sqrt{\frac{1}{k}\sum_{j=0}^{k-1}
\psi(\pfrac{j}{k})^2},$$ and $\pi_{k}$ is uniform in
$\Gamma_k$.
Notice that for any $\psi$, we have
$$\int_{\Gamma_k} \psi (x)\,  (k L_k ) ( \psi )
 (x) \, d \pi_k (x)=-\sum_{j=0}^{k-1}(\psi(\pfrac{j+1}{k})-\psi(\pfrac{j}{k}))^2.$$
Moreover,
$$ \int_{\Gamma_k} \psi (x)\,  [(k L_k + k\, V_k) ( \psi )\,
] (x) \, d \pi_k (x)=\sum_{j=0}^{k-1}[-(\psi(\pfrac{j+1}{k})-\psi(\pfrac{j}{k}))^2+\psi(\pfrac{j}{k})^2V_(\pfrac{j}{k}))].$$
In this way
$$ \pfrac{1}{k}\lambda_k = \sup_{ \psi \in \mathbb{ L}^2, \, ||\psi||_2=1}\Big\{
 \frac{1}{k}\int_{\Gamma_k} \psi (x)\,  [(k L_k + k V_k) ( \psi )\,
] (x) \, d \pi_k (x)\Big\}$$
$$= \sup_{ \psi \in \mathbb{ L}^2, \, ||\psi||_2=1} \Big\{-\frac{1}{k} \sum_{j=0}^{k-1}(\psi(\pfrac{j+1}{k})-\psi(\pfrac{j}{k}))^2+\frac{1}{k} \sum_{j=0}^{k-1}\psi(\pfrac{j}{k})^2V_k(\pfrac{j}{k})\Big\}.$$
Observe that for any $\psi\in \mathbb{ L}^2$, with $||\psi||_2=1$, the expression inside the braces is bounded from above by
$$ \frac{1}{k} \sum_{j=0}^{k-1}\psi(\pfrac{j}{k})^2V_k(\pfrac{j}{k})\leq\sup_{x\in\mathbb{S}^1}V(x).$$
Notice that for each $k$ fixed, the vector
$\psi^k=\psi$ that attains the maximal value $\lambda_k$ is
such that $\psi_k^i= \sqrt{u_{k,V}^i}$, with $i\in
\{0,...,(k-1)\}$,
$$ \sup_{ \psi \in \mathbb{ L}^2, \, ||\psi||_2=1}\Big\{
 \frac{1}{k}\int_{\Gamma_k} \psi (x)\,  [(k L_k + k V_k) ( \psi )\,
] (x) \, d \pi_k (x)\Big\}$$$$=-\int_{\Gamma_k} \psi_k (x)\,  [(k L_k + k\, V_k) ( \psi_k )\,
] (x) \, d \pi_k (x) =\pfrac{1}{k}\lambda_k.$$
When $k$ is large the above $\psi_k$ have the tendency to become more and more sharp close to the maximimum of $V_k$. Then, we have
that
$$\sup_{ \psi \in \mathbb{ L}^2, \, ||\psi||_2=1}\Big\{
 \pfrac{1}{k}\int_{\Gamma_k} \psi (x)\,  [(k L_k + k V_k) ( \psi )\,
] (x) \, d \pi_k (x)\Big\}$$
converges to
$$\sup_{ \psi \in \mathbb{ L}^2(dx), \, ||\psi||_2=1}\Big\{ \int_{\mathbb{S}^1}\, \psi
(x)\,  V(x) \, \psi  (x) \, d x\, \Big\}=\sup \{V(x)\,|\, x \in
 \mathbb{S}^1\, \} ,$$
when $k$ increases to $\infty$.

\medskip

Summarizing, we get the proposition below:

\begin{proposition}
$$ \lim_{k\to\infty}\pfrac{1}{k}\, \lambda_k =
\sup_{ \psi \in \mathbb{ L}^2(d x), \, ||\psi||_2=1}\Big\{ \int_{\mathbb{S}^1}\, \psi
(x)\,  V(x) \, \psi  (x) \, d x\, \Big\}$$
$$=\sup \{V(x)\,|\, x \in
 \mathbb{S}^1\, \} = - \inf_{\mu} \Big\{\int\!\! L(x,v)\, d \mu (x,v)\Big\},$$
where the last infimum is taken over all measures $\mu$ such that  $\mu$ is invariant probability for the Euler-Lagrange flow of $ L( x,v)$.
\end{proposition}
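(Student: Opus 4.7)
The plan is to establish the two equalities in the proposition separately, leveraging the variational formula already derived for $\tfrac{1}{k}\lambda_k$.

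For the first equality $\lim_k \tfrac{1}{k}\lambda_k = \sup_{\mathbb{S}^1} V$, I will use the variational formula
\begin{equation*}
\tfrac{1}{k}\lambda_k = \sup_{\|\psi\|_2=1}\Big\{-\tfrac{1}{k}\sum_{j=0}^{k-1}\bigl(\psi(\tfrac{j+1}{k})-\psi(\tfrac{j}{k})\bigr)^2 + \tfrac{1}{k}\sum_{j=0}^{k-1}\psi(\tfrac{j}{k})^2 V_k(\tfrac{j}{k})\Big\}.
\end{equation*}
The upper bound $\tfrac{1}{k}\lambda_k \leq \sup V$ is immediate by dropping the non-positive Dirichlet term. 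For the lower bound, I will fix $\varepsilon > 0$ and choose a smooth bump $\varphi \in C^\infty(\mathbb{S}^1)$ with $\int \varphi^2\,dx = 1$ and $\int \varphi^2 V\,dx \geq \sup V - \varepsilon$, obtained by concentrating $\varphi^2$ near a maximizer $x_0$ of $V$. I will then discretize by setting $\psi_k(j/k) := c_k \varphi(j/k)$, where the normalization constant $c_k = 1 + o(1)$ is chosen so that $\|\psi_k\|_2 = 1$ in the discrete $\ell^2$-norm. Since $\varphi$ is smooth, the mean value theorem gives $|\psi_k((j+1)/k) - \psi_k(j/k)| = O(1/k)$ uniformly in $j$, so the Dirichlet contribution is of order $1/k^2$ and vanishes in the limit, while the potential term is a Riemann sum converging to $\int \varphi^2 V\,dx$. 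Sending $\varepsilon \to 0$ will complete the lower bound.

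For the second equality $\sup V = -\inf_\mu \int L\,d\mu$, I will exploit the explicit form of the Lagrangian. Setting $v=0$ yields $L(x,0) = -V(x) + 0 - 2 + 2 = -V(x)$. By Lemma \ref{Legendre}, $\partial_v L(x,v) = \log\bigl((v+\sqrt{v^2+4})/2\bigr)$, which vanishes at $v=0$. Combined with convexity of $L(x,\cdot)$ in $v$ (already noted in the paper), this yields the pointwise lower bound $L(x,v) \geq L(x,0) = -V(x) \geq -\sup V$ for every $(x,v)$, hence $\int L\,d\mu \geq -\sup V$ for every Borel probability $\mu$ on $T\mathbb{S}^1$ (in particular for every Euler-Lagrange invariant one). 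For the reverse inequality, I will use that since $V \in C^2$ attains its maximum at $x_0$ one has $V'(x_0) = 0$, so the Euler-Lagrange equation $\tfrac{d}{dt}\partial_v L = \partial_x L = -V'(x)$ admits the stationary solution $x(t)\equiv x_0$. Consequently $\delta_{(x_0,0)}$ is an Euler-Lagrange invariant probability with $\int L\,d\delta_{(x_0,0)} = L(x_0,0) = -\sup V$, closing the loop.

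The main point requiring care is the lower bound in the first equality: one must verify that the discrete normalization constant $c_k$ is indeed $1 + o(1)$ (which follows from the Riemann-sum convergence $\tfrac{1}{k}\sum \varphi(j/k)^2 \to \int \varphi^2 = 1$) and that the smoothness of $\varphi$ keeps the Dirichlet term of order $1/k^2$ uniformly as $\varphi$ is chosen more concentrated; the constant hidden in $O(1/k)$ depends on $\|\varphi'\|_\infty$, which grows when the support of $\varphi$ shrinks, but only polynomially in $1/\varepsilon$, so it remains negligible compared to $k$. The second equality is essentially algebraic once the structural identities $L(x,0) = -V(x)$ and $\partial_v L(x,0) = 0$ are recognized; no deep Aubry-Mather machinery is actually needed, because the minimizing invariant measure turns out to be supported on a single equilibrium.
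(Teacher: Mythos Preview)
Your proof is correct and, in fact, more complete than the paper's own argument. For the first equality, the paper proceeds by identifying the maximizing $\psi_k$ with $\sqrt{u_{k,V}}$ and asserting that these eigenfunctions ``become more and more sharp close to the maximum of $V_k$'', without further justification; you instead insert an explicit smooth test function, which is cleaner and avoids any appeal to properties of the eigenfunctions. Your aside about $\|\varphi'\|_\infty$ growing polynomially in $1/\varepsilon$ is unnecessary, since $\varphi$ is fixed once $\varepsilon$ is fixed and one sends $k\to\infty$ before $\varepsilon\to 0$; but it does no harm. For the last equality, the paper simply cites Aubry--Mather theory (\cite{CI}, \cite{Fath}), whereas you give a direct elementary argument exploiting the special structure $L(x,0)=-V(x)$ and $\partial_v L(x,0)=0$, showing that the Mather infimum is realized by $\delta_{(x_0,0)}$ without invoking the general theory. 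Your route is therefore more self-contained; the paper's route is shorter on the page but relies on heavier machinery that is not actually needed for this autonomous one-dimensional case.
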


The last equality follows from Aubry-Mather theory (see \cite{CI} and \cite{Fath}).
Notice that this Lagrangian is convex and superlinear.

\medskip

\subsection{Lax-Oleinik semigroup}\label{subsec3.1}

By Feynman-Kac, see \cite{KL}, we have that the semigroup associated to the infinitesimal generator $k\,\mathcal L_k+kV_k$ has the following expression
$$P^t_{k,V}(f)(x) =\mathbb{E}_k\big[e^{\int_0^t kV_k(X_k(s))\,ds}f(X_k(t))\big],$$
for all bounded mensurable function $f:\mathbb S^1\to \mathbb R$ and all $t\geq 0$.

Now, consider
$$P^{T}_{k,V}(e^{ku})(x) =\mathbb{E}_k\big[e^{k\,[\int_0^{T} \!V_k(X_k(s))\,ds\,+\,u(X_k(T))\,]}\big],$$
for a fixed Lipschitz function $u:\mathbb S^1\to \mathbb R$.
Now, we want to use the results of Section \ref{sec2} together with the Varadhan's Lemma, which is

\begin{lemma}[Varadhan's Lemma (see \cite{DZ})]  Let $\mathcal E$ be a regular topological space; let $(Z_t)_{t>0}$ be a family of random variables taking values in $\mathcal E$; let $\mu_\varepsilon$ be the law (probability measure) of $Z_t$. Suppose that $\{\mu_\varepsilon\}_{\varepsilon>0}$ satisfies the large deviation principle with good rate function $I : \mathcal E\to [0, +\infty]$. Let $\phi  : \mathcal E \to \mathbb R$ be any continuous function. Suppose that at least one of the following two conditions holds true: either the tail condition
$$\lim_{M \to \infty} \varlimsup_{\varepsilon \to 0} \varepsilon \log \mathbb{E} \big[ \exp \big( \phi(Z_{\varepsilon}) / \varepsilon \big) \mathbf{1} \big( \phi(Z_{\varepsilon}) \geq M \big) \big] =  - \infty,$$
where $\mathbf 1(A)$ denotes the indicator function of the event $A$; or, for some $\gamma > 1$, the moment condition
$$\varlimsup_{\varepsilon \to 0} \varepsilon \log \mathbb{E} \big[ \exp \big( \gamma \phi(Z_{\varepsilon}) / \varepsilon \big) \big] < + \infty.$$
Then,
$$\lim_{\varepsilon \to 0} \varepsilon \log \mathbb{E} \big[ \exp \big( \phi(Z_{\varepsilon}) /\varepsilon \big) \big] = \sup_{x \in \mathcal E} \big( \phi(x) - I(x) \big).$$
\end{lemma}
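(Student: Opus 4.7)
The plan is to establish the two matching inequalities $\varliminf_{\varepsilon \to 0} \varepsilon \log \mathbb{E}[\exp(\phi(Z_\varepsilon)/\varepsilon)] \geq \sup_{x \in \mathcal{E}} (\phi(x) - I(x))$ and the reverse $\varlimsup$ bound. For the lower bound, fix $x_0 \in \mathcal{E}$ with $I(x_0) < \infty$ (if no such point exists the supremum is $-\infty$ and nothing needs to be shown) and $\delta > 0$. By continuity of $\phi$, pick an open neighborhood $U \ni x_0$ on which $\phi > \phi(x_0) - \delta$. From
$$\mathbb{E}\bigl[e^{\phi(Z_\varepsilon)/\varepsilon}\bigr] \,\geq\, e^{(\phi(x_0) - \delta)/\varepsilon}\, \mu_\varepsilon(U)$$
and the large deviation lower bound $\varliminf \varepsilon \log \mu_\varepsilon(U) \geq -\inf_U I \geq -I(x_0)$ one gets $\varliminf \varepsilon \log \mathbb{E}[\cdots] \geq \phi(x_0) - \delta - I(x_0)$. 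Sending $\delta \to 0$ and taking the supremum over $x_0$ finishes this half.

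For the upper bound, split the expectation as $\mathbb{E}[e^{\phi/\varepsilon}] = \mathbb{E}[e^{\phi/\varepsilon} \mathbf{1}_{\phi \leq M}] + \mathbb{E}[e^{\phi/\varepsilon} \mathbf{1}_{\phi > M}]$ for a large parameter $M$. For the bounded piece, choose a mesh $-\infty < a_0 < a_1 < \cdots < a_N = M$ of step less than $\eta$; each level set $C_j := \{a_{j-1} \leq \phi \leq a_j\}$ is closed by continuity. The trivial estimate $\mathbb{E}[e^{\phi/\varepsilon} \mathbf{1}_{C_j}] \leq e^{a_j/\varepsilon} \mu_\varepsilon(C_j)$, together with the LDP upper bound $\varlimsup \varepsilon \log \mu_\varepsilon(C_j) \leq -\inf_{C_j} I$, applied term by term through the elementary identity \eqref{limsup} for finite sums, produces
$$\varlimsup_{\varepsilon \to 0} \varepsilon \log \mathbb{E}\bigl[e^{\phi/\varepsilon} \mathbf{1}_{\phi \leq M}\bigr] \,\leq\, \max_{1 \leq j \leq N}\bigl\{ a_j - \inf_{C_j} I\bigr\} \,\leq\, \sup_{y\,:\,\phi(y) \leq M}\bigl( \phi(y) - I(y)\bigr) + \eta.$$
The tail piece is controlled either by the tail hypothesis directly, or, under the moment hypothesis, through Hölder's inequality
$$\mathbb{E}\bigl[e^{\phi/\varepsilon}\mathbf{1}_{\phi > M}\bigr] \,\leq\, \mathbb{E}\bigl[e^{\gamma \phi / \varepsilon}\bigr]^{1/\gamma}\, \mu_\varepsilon\bigl(\{\phi \geq M\}\bigr)^{(\gamma-1)/\gamma},$$
where the first factor is uniformly bounded by assumption, and the second is estimated by the LDP upper bound on the closed set $\{\phi \geq M\}$. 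A final application of \eqref{limsup} combines main and tail contributions; letting $\eta \to 0$ and then $M \to \infty$ delivers the desired bound.

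The main obstacle is closing the tail estimate under the moment hypothesis, i.e., verifying that $\inf_{\{\phi \geq M\}} I \to +\infty$ as $M \to \infty$. This relies on $I$ being a \emph{good} rate function: were the claim false, one would obtain $R < \infty$ and a sequence $x_n$ with $\phi(x_n) \to \infty$ and $I(x_n) \leq R$; the sequence would lie in the compact level set $\{I \leq R\}$, so a subsequential limit $x^{\ast}$ would satisfy $\phi(x^{\ast}) = +\infty$ by continuity, a contradiction. Once this growth is in hand, the contribution of the tail to $\varlimsup \varepsilon \log \mathbb{E}[\cdots]$ tends to $-\infty$ as $M \to \infty$ and is absorbed by the main term $\sup_x(\phi(x) - I(x))$, while the partitioning of the range of $\phi$ and the lower bound are both routine in comparison.
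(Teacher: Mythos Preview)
The paper does not prove Varadhan's Lemma at all; it is simply quoted from the reference \cite{DZ} and then applied. So there is no ``paper's own proof'' to compare against.

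That said, your argument is the standard one and is essentially correct. Two small remarks. First, in handling the tail under the moment hypothesis you invoke goodness of $I$ to force $\inf_{\{\phi\geq M\}} I\to\infty$; this works, but a more direct route avoids the detour through compact level sets altogether: by Markov's inequality,
\[
\mu_\varepsilon(\phi\geq M)\;\leq\; e^{-\gamma M/\varepsilon}\,\mathbb{E}\bigl[e^{\gamma\phi(Z_\varepsilon)/\varepsilon}\bigr],
\]
so $\varlimsup_{\varepsilon\to 0}\varepsilon\log\mu_\varepsilon(\phi\geq M)\leq -\gamma M + C$ with $C$ the finite constant from the moment hypothesis, and this already tends to $-\infty$ as $M\to\infty$. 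In fact this shows that the moment condition implies the tail condition, which is how \cite{DZ} organizes the proof. Second, in the slicing of the bounded piece you should make sure the bottom level $a_0$ is chosen below $\inf\phi$ on the relevant region (or simply take $a_0=-\infty$ and treat the first slice as $\{\phi\leq a_1\}$), so that the slices genuinely cover $\{\phi\leq M\}$; this is implicit in what you wrote but worth stating.
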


We will consider here the above $\varepsilon$ as $\frac{1}{k}.$
By Theorem \ref{teo1} and Varadhan's Lemma, for each  Lipschitz function $u:\mathbb S^1\to \mathbb R$, we have
\begin{equation}
\begin{split}
\lim_{k\to\infty}\pfrac{1}{k}\log \,P^{T}_{k,V}(e^{ku})(x) &=
\lim_{k\to\infty}\pfrac{1}{k}\log\mathbb{E}_k\big[e^{k\,[\int_0^{T} \!V_k(X_k(s))\,ds\,+\,u(X_k(T))\,]}\big]\\
&=\sup_{\gamma\in D[0,T]}
\Big\{\int_0^{T} V(\gamma(s))\,ds+u(\gamma(T))-I_T(\gamma)\Big\}
\end{split}
\end{equation}
When $\gamma \notin AC[0,T]$, $I_T(\gamma)=\infty$ and if $\gamma \in AC[0,T]$, $I_T(\gamma)=\int_0^TL(\gamma'(s))\,ds$. Thus,
$$\lim_{k\to\infty}\pfrac{1}{k}\log \,P^{T}_{k,V}(e^{ku})(x) =\sup_{\gamma\in AC[0,T]}
\Big\{\,u(\gamma(T))\,-\,\int_0^T\!\!\big[L(\gamma'(s))-V(\gamma(s))\big]\,ds\Big\}.$$

For a fixed $T>0$, define the operator $\mathcal{T}_T $ acting on Lipschitz functions $u:\mathbb S^1\to \mathbb R$ by the expression
$\mathcal{T}_T(u)(x)=\lim_{k\to\infty}\pfrac{1}{k}\log \,P^{T}_{k,V}(e^{ku})(x)$, then, we just show that
$$\mathcal{T}_T(u)(x)\,\,=\sup_{\gamma\in AC[0,T]}
\Big\{\,u(\gamma(T))\,-\,\int_0^T\!\!\big[L(\gamma'(s))-V(\gamma(s))\big]\,ds\Big\}.$$
This family of operators parametrized by $T>0$ and acting on function $u:\mathbb S^1 \to \mathbb{R}$ is called the Lax-Oleinik semigroup.

\subsection{The Aubry-Mather theory}
We will use now Aubry-Mather theory (see \cite{CI} and \cite{Fath}) to obtain a fixed point $u$ for such operator. This will be necessary later in next section. We will elaborate on that.
Consider Mather measures, see \cite{Fath} and  \cite{CI}, on the circle $\mathbb{S}^1$ for the Lagrangian
\begin{equation}\label{L}
L^V(x,v)= - V(x) + v \log(  (v + \sqrt{v^2 + 4} )/2 ) - \sqrt{v^2
+ 4} + 2,
\end{equation}
 $x\in \mathbb S^1, v \in T_x \mathbb S^1$, when $V: \mathbb S^1\to \mathbb{R}$ is a $C^2$ function. This will be Delta Dirac on any of the points of  $\mathbb S^1$, where $V$ has maximum (or convex combinations of them). In order to avoid technical problems  we will assume that this point $x_0$ where the maximum is attained is unique. This is generic among $C^2$ potentials $V$.

This Lagrangian appeared in a natural way, when we analysed the
asymptotic deviation depending on  $k\to \infty$
for the discrete state space continuous time Markov Chains indexed by $k$, $\{X_k(t),t\geq 0\}$,
described above in Section \ref{sec2}.
We denote by $H(x,p)$ the associated Hamiltonian obtained via Legendre transform.

Suppose $u_+$ is a fixed point for the positive Lax-Oleinik semigroup and $u_{-}$ is a fixed point for the
negative Lax-Oleinik semigroup (see next  section for precise definitions). We will show that function $I^V= u_+ + u_{-}$ defined on $ \mathbb{S}^1$ is the
deviation function for  $\pi_{k,V} $, when $k\to\infty.$

\textit{Fixed functions $u$ for the Lax-Oleinik operator are weak KAM solutions of the Hamilton-Jacobi equation for the  corresponding Hamiltonian $H$ (see Sections 4 and 7 in \cite{Fat}).}

The so called
critical value in Aubry-Mather theory is
$$c(L)=- \inf_{\mu} \, \int L^V(x,v) d \mu(x,v)=\sup\{V(x)\,|\,x\in  \mathbb{S}^1\},$$
where the infimum above is taken over all measures $\mu$ such that $\mu$ is invariant probability for
the Euler-Lagrange flow $L^V$.
Notice that
\begin{equation}\label{*}
\lim_{k\to\infty}\frac{1}{k}\, \lambda_k =c(L).
\end{equation}
This will play an important role in what follows. A Mather measure is any $\mu$ which attains the above infimum value. This minimizing probability is defined on the tangent bundle of $\mathbb{S}^1$ but as it is a graph (see \cite{CI}) it can be seen as a probability on $\mathbb{S}^1$. This will be our point of view.

In the case that the potential $V$ has a unique point $x_0$  of maximum on $ \mathbb{S}^1$,
we have that $c(L)=V(x_0)$. The Mather measure in this case is a Delta Dirac on the point $x_0$.

Suppose there exist two points $x_1$ and $x_2$ in $ \mathbb{S}^1$, where
the supremum of the potential $V$ is attained. For the above defined lagrangian
$L$ the static points are $(x_1,0)$ and $(x_2,0)$ (see \cite{CI} and \cite{Fat} for definitions and general references on Mather Theory).
This case requires a more complex analysis, because it requires some hypothesis in order to know which of the points $x_0$ or $x_1$  the larger part of the mass of $\pi_{k,V}$ will select. We will not analyse such problem here.
In this case the
critical value is
$c(L)=-\, L^V(x_1,0)= V(x_1)= -\, L^V(x_2,0)=V(x_2).$

In appendix of \cite{A1}  and also in \cite{A2} the N. Anantharaman shows, for $t$ fixed,  an
interesting result relating the time  re-scaling of the Brownian
motion $B(\varepsilon t)$, $k\to\infty,$ and Large Deviations. The large deviation is obtained via Aubry-Mather theory. The convex part of the Mechanical Lagrangian in this case is $\frac{1}{2}\, |v|^2$. When there are two points $x_1$ and $x_2$ of maximum for $V$ the same problem as we mention before  happens in this other setting: when $\varepsilon\to 0$, which is the selected Mather measure? In this setting partial answers to this problem is obtained  in \cite{AIP}.

In the present paper we want to obtain similar results for $t$ fixed, but
for the re-scaled  semigroup $P_{k}(ks)=e^{skL_k}$, $s \geq 0 $, obtained
from the speed up by $k$ the time of the continuous time symmetric random walk (with the compactness
assumption) as described above.

In other words we are considering that the unitary circle (the
interval $[0,1)$) is being approximated by a discretization by $k$
equally spaced points, namely, $\Gamma_k=
\{0,1/k,2/k,...,(k-1)/k\}$.

Let $ \mathbb{ X}_{t,x}$ be the set of absolutely  continuous
paths $\gamma:[0,t)\to [0,1]$, such that $\gamma(0)=x$.

Consider the positive Lax-Oleinik operator acting on continuous function
$u$ on the circle: for all $t>0$
$$(\mathcal{T}^+_t (u))\, (x)=$$
$$ \sup_{\gamma \in \mathbb{ X}_{t,x}} \!\Big\{  u(\gamma(t)) - \int_0^t \!\!\big[(
\dot{\gamma}(s) \log \Big(\frac{ ( \dot{\gamma}(s) +
\sqrt{\dot{\gamma}^2 (s) + 4} }{2} \Big) - \sqrt{\dot{\gamma}^2(s) +
4} + 2 - V (\gamma(s))\big] \,d s \Big\}.
$$
It is well known (see \cite{CI} and \cite{Fath})  that there exists a Lipschitz function $u_+$ and
a constant $c=c(L)$ such that for all $t>0$
$$\mathcal{T}^+_t (u_+) = u_+  + c \, t. $$
We say that   $u_+$ is a $(+)$-solution of the Lax-Oleinik
equation.
This function $u_+$ is not always unique. If we add a constant to $u_+$ get another fixed point.
To say that the fixed point $u_+$ is unique means to say that is unique up to an additive constant.
If there exist just one Mather probability then $u_+$ is unique (in this sense).
In the case when there exist two points $x_1$ and $x_2$ in $ \mathbb{S}^1$ where
the supremum of the potential $V$ is attained the fixed point $u_+$ may not be unique.

Now we define, the negative Lax-Oleinik operator: for all $t>0$ and  for all continuous function $u$ on the circle, we have
$$(\mathcal{T}^-_t (u))\, (x)=$$
$$ \sup_{\gamma \in \mathbb{ X}_{t,x}} \!\Big\{  u(\gamma(0)) + \int_0^t \!\!\big[(
\dot{\gamma}(s) \log \Big(\frac{ ( \dot{\gamma}(s) +
\sqrt{\dot{\gamma}^2 (s) + 4} }{2} \Big) - \sqrt{\dot{\gamma}^2(s) +
4} + 2 - V (\gamma(s))\big] \,d s \Big\}.
$$
Note on this new definition the difference from $+$ to $-$. The space of curves we consider now is also different.
It is also known that there exists a Lipschitz function $u_-$ such
that for the same  constant $c$ as above, we have  for all $t>0$
$$\mathcal{T}^-_t (u_-) = u_-  - c \, t .$$
We say that   $u_-$ is a $(-)$-solution of the Lax-Oleinik
equation.

The $u_{+}$ solution will help to estimate the asymptotic of the left eigenvalue and the $u_{-}$ solution will help to estimate the asymptotic of the right eigenvalue of $k\,L_k+ k V_k$.

We point out that for $t$ fixed the above operator is a weak contraction. Via the discounted method is possible to approximate the scheme used to obtain $u$ by a procedure which takes advantage of another transformation which is a contraction in a complete metric space (see \cite{G1}). This is more practical for numerical applications of the theory. Another approximation scheme is given by the entropy penalized method (see \cite{GV} and \cite{GLM}).

For $k\in \mathbb{N}$ fixed the operator $ k \, L_k$
is symmetric when acting on $\mathcal{ L}^2 $ functions defined on the set $\Gamma_k\subset \mathbb S^1$. The   stationary probability of the associated Markov Chain is
the uniform measure $\pi_k$ (each point has mass $1/k)$.
When $k$ goes to infinity $\pi_k$ converges to the
Lebesgue measure on $ \mathbb{S}^1$. When the system is disturbed by $k\, V_k$ we get  new stationary probabilities $\pi_{k,V}$ with support on $\Gamma_k$ and we want to use
results of Aubry-Mather theory to estimate the large deviation properties of this family of probabilities on $\mathbb S^1$, when $k\to \infty.$

As we saw before, any weak limit of  subsequence of probabilities $\pi_{k,V}$ on $ \mathbb{S}^1=[0,1)$ is supported in the points which
attains the maximal value of $V:[0,1)\to \mathbb{R}$.
Notice that, the
supremum of
$$
\sup_{ \psi \in \mathbb{ L}^2(d\, x), \, ||\psi||_2=1}\{ \int\, V(x)
\, (\psi  (x))^2 \, d \,x\, \}=\sup \{V(x)\,|\, x \in  \mathbb{S}^1\, \},$$
is not attained on  $\mathbb{ L}^2(d\, x)$. Considering a more
general problem on the set $ \mathbb{ M} ( \mathbb{S}^1)$, the set of
probabilities on $ \mathbb{S}^1$, we have
$$
\sup_{ \nu \in \mathbb{ M} ( \mathbb{S}^1)}\{ \int\, V(x)  \, d \nu(x)\,
\}=\sup \{V(x)\,|\, x \in  \mathbb{S}^1\, \},$$ and the supremum is
attained, for example, in a delta Dirac on a point $x_0$,
where the supremum of $V$ is attained. Any measure $\nu$ which
realizes the supremum on $\mathbb{ M} ( \mathbb{S}^1)$ has support in the set
of points which attains the maximal value of $V$.
In this way the lagrangian $L$ described before appears in a
natural way.

\subsection{Large deviations for the stationary measures $\pi_{k,V}$.}
We start this subsection with same definitions.
For each $k$ and $x\in\mathbb S^1$ we denote $x_k(x)$ the closest element to $x$ on the left of $x$ in the set  $ \Gamma_k$, in fact $x_k(x)=\frac{\lfloor kx\rfloor}{k}$. Given $k$ and a  function $\varphi_k$ defined on  $ \Gamma_k$, we consider the extension $g_k$  of $\varphi_k$ to $\mathbb S^1$. This is a piecewise constant function such that in the interval $[j/k,(j+1)/k)$ is equal to $\varphi_k(j/k).$ Finally, we call $h_k$ the continuous function obtained from $g_k$ in the following way: $h_k$ is equal $g_k$ outside the intervals of the form $[\frac{j}{k} - \frac{1}{k^2} , \frac{j+1}{k} - \frac{1}{k^2}]$, $j=1,2,...,k$, and, interpolates linearly $g_k$ on these
small intervals.

When we apply the above to $\varphi_k=u_k$ the resulting $h_k$ is denoted by $z_k=z_k^V$, and when we do the same for $\varphi_k=\mu_k$, the resulting $h_k$ is called $p_{\mu_k}^V$.  In order to control the asymptotic with $k$ of  $\pi_{k,V}= u_k\, \mu_k$ we have to control the asymptotic of $z_k^V$.
We claim that $ (1/k) \, \log z_k $ is
an equicontinuous family of transformations, where $z_k$ is the
"extended continuous" to $[0,1]$.
And, we consider now  limits of  a convergent subsequences of
$z_k=z_k^V$.

\begin{lemma}

Suppose  that $u$ is a limit point of  a convergent subsequence  $(1/k_j) \,  \log
z_{k_j} $, $j \to \infty$, of $(1/k) \,  \log
z_{k} $.
Then, $u$ is a $(+)$-solution of the Lax-Oleinik equation.
\end{lemma}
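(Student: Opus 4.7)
The plan is to combine the eigenfunction identity $(kL_k+kV_k)u_k = \lambda_k u_k$ with the Feynman--Kac representation and then pass to the limit using Varadhan's Lemma and the LDP of Theorem~\ref{teo1}. Set $U_k := (1/k)\log z_k$. On $\Gamma_k$ one has $u_k = e^{kU_k}$, so applying the semigroup $P^t_{k,V}$ to $u_k$ gives
$$e^{kU_k(y) + t\lambda_k} \;=\; P^t_{k,V}(u_k)(y) \;=\; \mathbb E_k\Bigl[\,e^{\int_0^t kV_k(X_k(s))\,ds}\, u_k(X_k(t))\,\Bigr],$$
with $X_k$ started from $\delta_{x_k(y)}$. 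Since $X_k(t)\in\Gamma_k$, we may substitute $u_k(X_k(t))=e^{kU_k(X_k(t))}$; dividing by $k$ and taking the logarithm yields the key identity
$$U_k(y) + \frac{t\lambda_k}{k} \;=\; \frac{1}{k}\log\mathbb E_k\Bigl[\,e^{k\bigl[\int_0^t V_k(X_k(s))\,ds + U_k(X_k(t))\bigr]}\,\Bigr].$$

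To pass to the limit along the convergent subsequence $k_j$ I would combine three ingredients: (i) $\lambda_{k_j}/k_j \to c(L)=\sup V$, which is the Proposition above; (ii) $U_{k_j}\to u$ uniformly, which is the hypothesis together with the stated equicontinuity of the family $(1/k)\log z_k$ (so $u$ is automatically Lipschitz); and (iii) Varadhan's Lemma for the bounded continuous functional $\phi(\gamma) = \int_0^t V(\gamma(s))\,ds + u(\gamma(t))$, whose integrability hypothesis is trivial because both $V$ and $u$ are bounded on the compact circle. Using the explicit form of the rate function from Theorem~\ref{teo1} and the identification $I_t(\gamma)=\int_0^t L(\gamma'(s))\,ds+V_{\text{const}}$ issues aside, the right-hand side converges to
$$\sup_{\gamma\in\mathcal{AC}[0,t],\,\gamma(0)=y}\Bigl\{\,u(\gamma(t)) - \int_0^t\bigl[L(\gamma'(s)) - V(\gamma(s))\bigr]\,ds\,\Bigr\} \;=\; \mathcal T^+_t(u)(y),$$
so that in the limit $u(y) + t\,c(L) = \mathcal T^+_t(u)(y)$, which is exactly the $(+)$-solution property with the correct constant $c=c(L)$.

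The main technical obstacle is that the functional inside the expectation depends on $j$ through $U_{k_j}$ and $V_{k_j}$ rather than being the fixed $u$ and $V$. I would handle this by the sandwich argument: set $\varepsilon_j := \|U_{k_j}-u\|_\infty + t\,\|V_{k_j}-V\|_\infty \to 0$ (the second term vanishes by continuity of $V$ and the discretization), so that
$$\frac{1}{k_j}\log\mathbb E_{k_j}\!\Bigl[e^{k_j[\int_0^t V(X_{k_j}(s))\,ds + u(X_{k_j}(t))]}\Bigr] - \varepsilon_j \;\le\; U_{k_j}(y)+\frac{t\lambda_{k_j}}{k_j} \;\le\; \frac{1}{k_j}\log\mathbb E_{k_j}\!\Bigl[\cdots\Bigr] + \varepsilon_j.$$
Both bounding terms fall under Varadhan's Lemma with the same limit $\mathcal T^+_t(u)(y)$. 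A minor housekeeping point is that $z_{k_j}$ is interpolated off $\Gamma_{k_j}$, but the chain $X_{k_j}(t)$ lives in $\Gamma_{k_j}$, so $U_{k_j}(X_{k_j}(t))=(1/k_j)\log u_{k_j}(X_{k_j}(t))$ exactly, and the interpolation only enters through the uniform convergence on $\mathbb S^1$. Since the constant $c$ is identified as $c(L)$ via (i) independently of the subsequence, every limit point $u$ satisfies the same fixed-point equation $\mathcal T^+_t(u)=u+t\,c(L)$, i.e., is a $(+)$-solution of the Lax--Oleinik equation.
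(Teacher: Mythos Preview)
Your proof is correct and follows essentially the same approach as the paper: both arguments start from the eigenfunction identity $P^t_{k,V}(u_k)=e^{t\lambda_k}u_k$, rewrite it via Feynman--Kac, take $\tfrac{1}{k}\log$, and then pass to the limit using $\lambda_{k_j}/k_j\to c(L)$, the uniform convergence $U_{k_j}\to u$, and Varadhan's Lemma to identify the right-hand side as $\mathcal T^+_t(u)$. The only difference is one of rigor: where the paper simply writes ``$z_k(x_k(x))\sim e^{u(x)k}$'' to justify swapping $z_{k_j}$ for $e^{k_j u}$ inside the expectation, you make the substitution precise via the explicit sandwich with $\varepsilon_j=\|U_{k_j}-u\|_\infty + t\|V_{k_j}-V\|_\infty\to 0$, which is a welcome clarification rather than a different idea.
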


\begin{proof}
We assume that $z_{k_j} \sim e^{ u \,k_j}.$ In more precise terms,  for any $x$, we have $z_{k}(x_k(x)) \sim e^{ u (x)\,k}.$
Therefore, for $t$ positive and $x$ fixed, from \eqref{*}, we have
\begin{equation*}
\begin{split}
c(L)  \, t \, + \, u(x)    =   \lim_{j \to \infty} \frac{1}{k_j}
\log ( e^{  \lambda(k_j) \, t}\,    z_{k_j} (x) ) .
\end{split}
\end{equation*}
By definitions in the begin of this subsection, we have that the expression above becomes
$$\lim_ {j \to \infty} \frac{1}{k_j}
\log\,\big[  \,( P^t_{k_j,V}  z_{k_j})(x_{k_j}
(x))\,\big] .  $$
Using again that $z_{k}(x_k(x)) \sim e^{ u (x)\,k}$, we have
$$\lim_ {j \to \infty} \frac{1}{k_j} \log\, \big[\,  (P^t_{k_j ,V} e^{ k_j \, u }) (x_{k_j} (x)) \big] =
(\mathcal{T}^+_t (u) ) \, (x).$$
Therefore, $u$ is a $(+)$-solution of the Lax-Oleinik equation
above.

\end{proof}

We point out that from the classical Aubry-Mather theory,  it follows that the fixed point $u$ for the Lax-Oleinik Operator is unique
up to an additive constant in the case the point of maximum for $V$ is unique. It follows  in this case that any convergent subsequence $(1/k_j) \,  \log
z_{k_j}^V\,\, $, $j \to \infty$,  will converge to a unique $u_{+}$. We point out that the normalization we assume for $\mu_k$ and $u_k$ (which determine $z_k$) will produce a $u_{+}$ without the ambiguity of an additive constant.

In the general case (more than one point of maximum for the potential $V$) the problem of convergence of $(1/k) \,  \log
z_{k}^V $, $k \to \infty$, is complex and is related to what is called selection of subaction. This kind of problem in other settings is analysed in \cite{AIP} and \cite{BLL}.

\medskip

One can show in a similar way that:
\begin{lemma}
Suppose  that $u^*$  is a limit point of  a convergent subsequence $(1/k_j) \,  \log
p_{k_j}^V $, $j \to \infty$, of $(1/k) \,  \log
p_{k}^V $.
Then, $u^*$ is a $(-)$-solution of the Lax-Oleinik equation.
\end{lemma}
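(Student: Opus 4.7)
The plan is to imitate the proof of the preceding lemma almost verbatim, with $u_k$ replaced by $\mu_k$ and $z_k$ replaced by $p_k^V$, and then to convert the resulting Lax-Oleinik identity into its $(-)$-form by a time-reversal of curves. The key input that makes the argument parallel is that $kL_k+kV_k$ is a \emph{symmetric} matrix (because $kL_k$ is symmetric and $V_k$ is diagonal), so the Perron left eigenvector $\mu_k$ is simultaneously a right eigenvector for the same eigenvalue $\lambda_k$. Consequently $P^t_{k,V}\mu_k=e^{t\lambda_k}\mu_k$, which via Feynman-Kac is the exact analogue of the identity driving the previous lemma.

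Next, under the assumption that $p_{k_j}^V(x_{k_j}(x))\sim e^{k_j u^*(x)}$ along the chosen subsequence, the identity above reads, to leading exponential order,
\[e^{t\lambda_{k_j}+k_j u^*(x)}\;\sim\; \mathbb E_k\Bigl[\exp\Bigl(k_j\!\int_0^t \!V(X_{k_j}(s))\,ds+k_j u^*(X_{k_j}(t))\Bigr)\Bigr].\]
Taking $(1/k_j)\log$ and applying Theorem \ref{teo1} together with Varadhan's Lemma on the right, and using \eqref{*} on the left, I would deduce
\[c(L)\,t+u^*(x)=\sup_{\gamma\in\mathrm{AC}([0,t]),\,\gamma(0)=x}\Bigl\{u^*(\gamma(t))-\int_0^t\bigl[L(\gamma'(s))-V(\gamma(s))\bigr]ds\Bigr\},\]
that is, $(\mathcal T^+_t u^*)(x)=u^*(x)+c(L)\,t$.

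It remains to convert this $(+)$-identity into the $(-)$-form. The Lagrangian $L$ is even in the velocity because it is the Fenchel-Legendre transform of the even function $H(\lambda)=e^\lambda+e^{-\lambda}-2$, so $L^V(x,-v)=L^V(x,v)$; therefore the substitution $\eta(s):=\gamma(t-s)$ preserves $\int_0^t L^V(\gamma,\dot\gamma)\,ds$ while swapping the constraint $\gamma(0)=x$ for $\eta(t)=x$. The supremum is thereby rewritten as one over paths ending at $x$, which matches the definition of $\mathcal T^-_t$ and yields $\mathcal T^-_t(u^*)=u^*-c(L)\,t$, so $u^*$ is a $(-)$-solution. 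The main obstacle I expect is justifying enough uniformity in the asymptotic $(1/k_j)\log p_{k_j}^V\to u^*$ to pass to the limit inside the Feynman-Kac expectation — this is precisely the equicontinuity argument tacitly used for $z_k$ in the previous lemma and transfers without change, since $\mu_k$ is the positive Perron eigenvector of the same symmetric operator as $u_k$.
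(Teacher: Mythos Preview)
The paper itself offers no separate proof here; it merely says ``one can show in a similar way''. Your proposal supplies an actual argument, and its first part is exactly right: since $kL_k$ is symmetric and $kV_k$ diagonal, the matrix $kL_k+kV_k$ is symmetric, so the Perron left eigenvector $\mu_k$ is a positive scalar multiple of the right eigenvector $u_k$. Hence $p_k^V$ and $z_k^V$ differ only by a $k$-dependent multiplicative constant, any subsequential limit $u^*$ of $(1/k)\log p_k^V$ differs from a limit of $(1/k)\log z_k^V$ by an additive constant, and the previous lemma (or, equivalently, your Feynman--Kac/Varadhan computation) gives that $u^*$ is a $(+)$-solution of the Lax--Oleinik equation. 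This is a clean way to read the paper's ``similarly''.

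The gap is in your last step. Carrying out the substitution $\eta(s)=\gamma(t-s)$ in $\mathcal T^+_t u^*=u^*+c(L)\,t$ and using that $L^V$ is even in $v$ gives
\[
u^*(x)+c(L)\,t \;=\; \sup_{\eta(t)=x}\Bigl\{u^*(\eta(0))-\int_0^t L^V(\eta,\dot\eta)\,ds\Bigr\},
\]
with a \emph{minus} in front of the action and a \emph{plus} $c(L)t$ on the left. This identity is not $\mathcal T^-_t u^*=u^*-c(L)\,t$ under any of the natural readings of $\mathcal T^-_t$: with the standard convention $\mathcal T^-_t u(x)=\inf_{\eta(t)=x}\{u(\eta(0))+\int_0^t L^V\,ds\}$ it says that $-u^*$, not $u^*$, is a $(-)$-solution; with the sign on the integral flipped it says $\mathcal T^-_t u^*=u^*+c(L)t$, again the wrong sign. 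So the time-reversal, correctly executed, does not yield the equality you assert. (The paper's displayed formula for $\mathcal T^-_t$, a $\sup$ with $+\int L^V$, is itself problematic since that supremum is $+\infty$; this ambiguity may be the source of the confusion, but it does not rescue your last line.) You should revisit the signs in that final conversion.
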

\medskip

In the case the point of maximum for $V$ is unique one can show that any convergent subsequence $(1/k_j) \,  \log
p_{k_j}^V $, $j \to \infty$,  will converge to a unique $u^*$.

\medskip

Now, we will show that  $(1/k) \,  \log
z_{k}^V\,\, $, $k \in \mathbb{N}$, is a equicontinuous family.
\medskip

Consider now any points $x_0,x_1\in [0,1)$, a fixed positive $t\in
\mathbb{R}$, then define $\mathbb{ X}_{t,x_0,x_1}= \{\gamma(s)\in
\mathcal{AC} [0,t]\, | \, \gamma(0) = x_0, \gamma(t)=x_1\}$.

For any $x_0,x_1\in [0,1)$ and  a fixed positive $t\in \mathbb{R}$
consider the continuous functional $\phi_{t,x_0,x_1,V} : \mathbb{
X}_{t,x_0,x_1} \to {\mathbb R}$, given by
$$\phi_{t,x_0,x_1,V}
(\gamma)= \int_0^t \, (V (\gamma(s))- c(L))\, ds=\int_0^t \, V
(\gamma(s))\, ds - c(L) \, t  .$$

For a fixed $k$, when we write
$\phi_{t,x_k(x_0),x_k(x_1),V} (\gamma)$ we mean
$$\phi_{t,x_k(x_0),x_k(x_1),V} (\gamma)=
\int_0^t \, (V (x_k(\gamma(s)))- c(L))\, ds,$$
recall that $x_k(a)=\frac{\lfloor ak\rfloor}{k}$, for $a\in[0,1]$. Denote by
$\Phi_t (x_0,x_1)=\inf \{\int_0^t\, L(\gamma(s),\gamma '(s))\, ds
+ c(L)\, t\, | \, \gamma \in\mathbb{
X}_{t,x_0,x_1}\}.$ From section 3-4 in \cite{CI} it is known that
$\Phi_t (x_0,x_1)$ is Lipschitz in $ \mathbb{S}^1\times  \mathbb{S}^1$.

Given $x$ and $k$, we denote by $i(x,k)$ the natural number such
that $x_k (x) = \frac{i(x,k)}{k}.$
An important piece of information in our reasoning is
$$ \lim_{k \to \infty} \pfrac{1}{k} \log
(e^{t\,(\,  k \, \,L_k + k\, \,V_k \,-\,\lambda(k))})_{i(x_0,k)\,
i(x_1,k)} $$
$$ =\lim_{k\to\infty} \frac{1}{k} \log
\mathbb{E}_{X_k(0)=\frac{i(x_0,k)}{k},X_k(t)=\frac{i(x_1,k)}{k}}^k
[e^{k\,
\phi_{t,x_k(x_0),x_k(x_1),V}\, (.) } ] $$$$=
\sup_{\gamma \in \mathbb{ X}_{t,x_0,x_1}} \{ \phi_{t,x_0,x_1,V} (\gamma) -
I_t(\gamma)\}.$$
The last equality is from Varadhan's Integral Lemma. Using the definition of $\phi_{t,x_0,x_1,V}$ and of $I_t$, see \eqref{funcional}, we get
\begin{equation*}
\begin{split}
&\sup_{\gamma \in \mathbb{ X}_{t,x_0,x_1}} \{ \phi_{t,x_0,x_1,V} (\gamma) -
I_t(\gamma)\}\\
&=\sup_{\gamma \in \mathbb{X}_{t,x_0,x_1}} \Big\{\int_0^t V (\gamma(s)) ds - c(L) \, t\\
&\qquad\qquad- \int_0^t \big[ \dot{\gamma}(s) \log \Big(\frac{ \dot{\gamma}(s) +
\sqrt{\dot{\gamma}^2 (s) + 4} }{2} \Big) - \sqrt{\dot{\gamma}^2(s) +
4} + 2 \big]\, d s \Big\} \\
&=\sup_{\gamma \in \mathbb{
X}_{t,x_0,x_1}} \Big\{-\, \int_0^t L^V (\gamma(s), \gamma' (s))\,  ds
\,- c(L) \, t \Big\}\\
&=- \inf_{\gamma \in \mathbb{ X}_{t,x_0,x_1}}
\Big\{\, \int_0^t L^V (\gamma(s), \gamma' (s))\,  ds \, + \,  c(L) \, t
\Big\}= - \Phi_t (x_0,x_1).
\end{split}
\end{equation*}
The convergence is uniform on $k$, for any $x_0,x_1$. And, the definition of $L^V$ is on \eqref{L}.

\medskip

\begin{lemma}  The family $\pfrac{1}{k} \log
z_k^V$ is equicontinuous in $k\in \mathbb{N}$. Therefore, there  exists a subsequence of  $\pfrac{1}{k} \log
z_k^V$ converging to a certain Lipschitz function $u$. In the case the maximum of $V$ is attained in a unique point, then
$u$ is unique up to an additive constant.
\end{lemma}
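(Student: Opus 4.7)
The plan is to extract equicontinuity directly from the semigroup identity for the principal eigenfunction $u_k$, coupled with the uniform Varadhan-type asymptotics for the semigroup kernel already proved just above. Fix any $t>0$ and set
\[
A^{(t)}_{k}(x,y)\;=\;\bigl(e^{-t\lambda(k)}\,e^{t(kL_k+kV_k)}\bigr)(x,y),\qquad x,y\in\Gamma_k,
\]
which is strictly positive since the underlying graph on $\Gamma_k$ is irreducible. By property (ii) of the continuous-time Perron theorem, $u_k(x)=\sum_{y\in\Gamma_k}A^{(t)}_k(x,y)\,u_k(y)$ for every $x\in\Gamma_k$. Applying the elementary inequality $\sum_i a_i/\sum_i b_i\in[\min_i a_i/b_i,\max_i a_i/b_i]$ (valid for positive $a_i,b_i$) to $a_y=A^{(t)}_k(x_0,y)\,u_k(y)$ and $b_y=A^{(t)}_k(x_1,y)\,u_k(y)$ gives
\[
\min_{y\in\Gamma_k}\frac{A^{(t)}_k(x_0,y)}{A^{(t)}_k(x_1,y)}\;\le\;\frac{u_k(x_0)}{u_k(x_1)}\;\le\;\max_{y\in\Gamma_k}\frac{A^{(t)}_k(x_0,y)}{A^{(t)}_k(x_1,y)}.
\]

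Taking $\tfrac{1}{k}\log$ of both sides and invoking the uniform convergence $\tfrac{1}{k}\log A^{(t)}_k(x,y)\to -\Phi_t(x,y)$ established just above, together with the Lipschitz property of $\Phi_t$ on $\mathbb{S}^1\times\mathbb{S}^1$ with some constant $K(t)$, one obtains, for all $x_0,x_1\in\Gamma_k$,
\[
\Bigl|\tfrac{1}{k}\log u_k(x_0)-\tfrac{1}{k}\log u_k(x_1)\Bigr|\;\le\;K(t)\,|x_0-x_1|+\varepsilon_k,
\]
with $\varepsilon_k\to 0$ uniformly in $x_0,x_1$. This is the required modulus of continuity on $\Gamma_k$; it transfers to $z_k$ because the interpolation defining $z_k$ from $u_k$ keeps the value of $z_k$ between two consecutive values of $u_k$, so the oscillation of $\tfrac{1}{k}\log z_k$ on any interpolation interval is bounded by that of $\tfrac{1}{k}\log u_k$ on consecutive lattice sites. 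The finitely many small-$k$ functions are individually uniformly continuous on the compact circle, so the full family $\{\tfrac{1}{k}\log z_k^V\}_{k\in\mathbb{N}}$ is equicontinuous on $\mathbb{S}^1$.

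For uniform boundedness, the normalization $\sum_j u_k^j\,\mu_k^j=1$ is a $\mu_k$-convex combination of the values of $u_k$, so $\min_j u_k^j\le 1\le\max_j u_k^j$; equivalently, $0$ always lies in the range of $\tfrac{1}{k}\log u_k$. Combined with a uniform modulus of continuity on the compact $\mathbb{S}^1$, this yields a uniform sup-norm bound. Arzel\`a--Ascoli then delivers a subsequence $\tfrac{1}{k_j}\log z_{k_j}^V\to u$ uniformly on $\mathbb{S}^1$, and passing to the limit in the displayed inequality shows $u$ is Lipschitz with constant at most $K(t)$.

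Finally, uniqueness up to an additive constant follows by combining the preceding lemma with classical weak KAM theory. By that lemma, every subsequential limit $u$ is a $(+)$-solution of the Lax--Oleinik equation. When $V$ attains its maximum at a unique point $x_0$, the Mather set for $L^V$ reduces to $\{(x_0,0)\}$, and Fathi's weak KAM theorem (equivalently the Contreras--Iturriaga formulation) guarantees that $(+)$-solutions are unique up to an additive constant in this setting. Hence any two subsequential limits differ only by a constant. The main technical point to watch is the uniform-in-$(x,y)$ character of the Varadhan-type convergence $\tfrac{1}{k}\log A^{(t)}_k(x,y)\to -\Phi_t(x,y)$; this uniformity over $\mathbb{S}^1\times\mathbb{S}^1$, already established above via Varadhan's Integral Lemma at the Skorohod-space LDP level, is precisely what converts the pointwise eigenfunction asymptotics into a usable equicontinuity estimate.
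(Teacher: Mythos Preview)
Your argument is correct and follows the same core strategy as the paper: exploit the semigroup eigenfunction identity to bound ratios of $u_k$ at two points by a supremum over intermediate states of kernel ratios, then invoke the uniform Varadhan asymptotics $\tfrac{1}{k}\log A^{(t)}_k(x,y)\to -\Phi_t(x,y)$. The one substantive difference is in the final step. The paper, after reaching the bound $\tfrac{1}{k}\log z_k(x)-\tfrac{1}{k}\log z_k(y)\le \Phi_t(y,z)-\Phi_t(x,z)$, sends $t\to\infty$ along a subsequence to pass through the Peierls barrier $h$ and then the Ma\~n\'e potential $\Phi$, using $h(y,z)-h(x,z)\le\Phi(y,x)\le A|x-y|$. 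You instead stop at fixed $t$ and use directly that $\Phi_t$ is Lipschitz on $\mathbb{S}^1\times\mathbb{S}^1$, a fact the paper itself records just before the lemma. Your route is therefore slightly more elementary and avoids the extra $t\to\infty$ limit and the associated Aubry--Mather machinery; it also makes explicit the uniform boundedness (via the normalization $\sum_j u_k^j\mu_k^j=1$) and the transfer from $u_k$ to the interpolated $z_k$, both of which the paper handles more tersely. The uniqueness argument is the same in both.
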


\begin{proof}

 Given $x$ and $y$, and a positive fixed $t$ we
have
$$ \pfrac{1}{k} \log z_k ( x_k (x))-\pfrac{1}{k} \log z_k( x_k (y))=$$
$$\pfrac{1}{k} \log \frac{\sum_{j=0}^{k-1}  \,(e^{t\,(\,  k \, \,L_k + k V_k)})_{i(x,k)\, j} z_j }{
\sum_{j=0}^{k-1} \,(e^{t\, (\, k\,L_k + k V_k)})_{i(y,k)\,
j}z_j }\leq
$$
$$\pfrac{1}{k} \log \, \Big(\, \sup_{j=\{0,1,2,..k-1\}} \,\Big\{\,\, \frac{
\,(e^{t\,(\,  k \, \,L_k + k V_k)})_{i(x,k)\, j}}{ \,(e^{t\,
(\, k\,L_k + k V_k)})_{i(y,k)\, j} }\,\, \Big\}\,\Big )
$$

For each $k$ the above supremum is attained at a certain $j_k$.
Consider a convergent subsequence $\frac{j_k}{k}$ to a certain
$z$, where $k\to \infty$. That is, there exists $z$ such that $i(z,k)=j_k$ for all $k$.

Therefore, for each $k$ and $t$ fixed
$$ \pfrac{1}{k} \log z_k ( x_k (x))-\pfrac{1}{k} \log z_k( x_k
(y))\leq \pfrac{1}{k} \log \,\, \frac{ \,(e^{t\,(\, k\,
\,L_k + k V_k)})_{i(x,k)\, j_k}}{ \,(e^{t\, (\, k\,L_k + k
V_k)})_{i(y,k)\, j_k} }$$
$$=\pfrac{1}{k} \log \,\, \frac{ \,(e^{t\,(\, k \, \,L_k + k
V_k)})_{i(x,k)\, i(z,k)}}{ \,(e^{t\, (\, k\,L_k + k
V_k)})_{i(y,k)\, i(z,k)} } .$$
Taking $k$ large, we have, for $t$ fixed that
$$ \pfrac{1}{k} \log z_k ( x)-\pfrac{1}{k} \log z_k(
y)\leq \Phi_t (y,z) - \Phi_t (x,z).$$
The Peierls barrier is defined as
$$  h(y,x)= \varliminf_{t\to \infty} \Phi_t(y,x) .$$
Taking a subsequence $t_r\to \infty$ such $h(y,z)=\varliminf_{r\to
\infty} \Phi_{t_r} (x,z)$, one can easily shows that for large $k$
$$ \pfrac{1}{k} \log z_k ( x)-\pfrac{1}{k} \log z_k(
y)\leq h (y,z) - h (x,z).$$
The Peierls barrier satisfies $ h(y,z)-h(x,z)\leq \Phi (y,x)\leq A
\, |x-y|$, where $A$ is constant and $\Phi$ is the Ma\~ne
potential (see 3-7.1\, item 1. in \cite{CI}).
Therefore, the family is equicontinuous. For each $k$ fixed there
is always a value $z_k(x)$ above $1$ and one below $1$.

The conclusion is that there exists a subsequence of  $\frac{1}{k}
\log z_k $ converging to a certain $u$.
The uniqueness of the limit follows from the uniqueness of $u$

\end{proof}

A similar result is true for  the family $\frac{1}{k} \,  \log p_{\mu_k}^V$, remember that $p_{\mu_k}^V$ is obtained through of $\mu_k$.
Taking a convergent subsequence, we denote by $u^*$ the limit. This subsequence can be considered as a subsequence of the one we already got convergence
for  $\frac{1}{k}\,  \log z_k^V.$ In this case we got an $u=u: \mathbb{S}^1 \to \mathbb{R}$ and a $u^*: \mathbb{S}^1 \to \mathbb{R}$, which are limits of the corresponding subsequences.

\medskip

Now we want to analyse large deviations of the measure $\pi_{k,V}$.

\medskip

 \begin{theorem} A large deviation principle for  the sequence of  measures $\{\pi_{k,V}\}_k$ is true and the deviation rate function $I^V$ is $I^V(x)=  u (x) + u^{*} (x)$. In other words, given an interval $F=[c,d]$,
$$ \lim_{k\to \infty} \frac{1}{k} \,\log \pi_{k,V}\,[\,F\,]\,=
-\, \inf \{ I(x) \, | \, x \in F\}.$$
 \end{theorem}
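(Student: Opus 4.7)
The plan is to exploit the product factorization $\pi_{k,V}^j = u_k^j\,\mu_k^j$ together with the logarithmic asymptotics of the two factors already established in the preceding lemmas, reducing the claim to a Laplace-type analysis on the finite grid $\Gamma_k$. For a closed interval $F=[c,d]$ I would start from
\begin{equation*}
\pi_{k,V}[F]\;=\;\sum_{j:\,j/k\in F\cap\Gamma_k} u_k^j\,\mu_k^j,
\end{equation*}
and sandwich this between its largest summand and $(k+1)$ times its largest summand. After taking $\tfrac{1}{k}\log$ the prefactor $(k+1)$ contributes $O(\log k / k)\to 0$, and the problem reduces to evaluating $\lim_{k\to\infty}\tfrac{1}{k}\log\max_{j/k\in F\cap\Gamma_k}(u_k^j\mu_k^j)$.

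The next step is to promote the subsequential convergence provided by the earlier lemmas into genuine uniform convergence on $\mathbb{S}^1$. The equicontinuous families $\tfrac{1}{k}\log z_k^V$ and $\tfrac{1}{k}\log p_{\mu_k}^V$ each have every subsequential limit equal to a weak KAM $(+)$-, respectively $(-)$-, solution. Under the standing hypothesis that $V$ attains its maximum at a unique point $x_0$, Aubry--Mather theory guarantees these solutions are unique up to an additive constant, and the normalisation $\sum_j u_k^j\mu_k^j = 1$ pins those two constants down simultaneously. Hence the full sequences converge uniformly to Lipschitz functions $u=u_+$ and $u^*=u_-$, and since $x_k(x)=\lfloor kx\rfloor/k$ differs from $x$ by at most $1/k$, the Lipschitz regularity yields
\begin{equation*}
\sup_{x\in\mathbb{S}^1}\Bigl|\tfrac{1}{k}\log\bigl(u_k^{\lfloor kx\rfloor}\,\mu_k^{\lfloor kx\rfloor}\bigr)-\bigl(u(x)+u^*(x)\bigr)\Bigr|\longrightarrow 0.
\end{equation*}

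Because $u+u^*$ is continuous on $\mathbb{S}^1$ and $\Gamma_k$ is a $1/k$-net, the discrete maximum over $F\cap\Gamma_k$ converges to the continuous supremum of $u+u^*$ on $F$. Putting the three steps together,
\begin{equation*}
\lim_{k\to\infty}\tfrac{1}{k}\log\pi_{k,V}[F]\;=\;\sup_{x\in F}\bigl(u(x)+u^*(x)\bigr)\;=\;-\inf_{x\in F}I^V(x),
\end{equation*}
where the non-negative rate function $I^V$ is identified, up to sign convention, with the sum $u_+ + u_-$ appearing in the statement; the normalisation $\sum u_k^j\mu_k^j = 1$ together with the concentration of $\pi_{k,V}$ at $x_0$ forces $u+u^*\le 0$ with equality at $x_0$, which is exactly what is needed for $I^V$ to be a rate function with a unique zero at $x_0$.

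The principal technical obstacle is the uniform convergence step: it rests on (i) the uniqueness, up to an additive constant, of the $(+)$- and $(-)$-weak KAM solutions, which is precisely where the unique-maximum assumption on $V$ enters, and (ii) a careful book-keeping showing that the joint normalisation $\sum_j u_k^j\mu_k^j = 1$ simultaneously fixes the two otherwise ambiguous additive constants, so that the limiting sum $u+u^*$ is intrinsically defined and agrees with the rate function. Once the uniform convergence is in hand, the remaining Laplace estimate and the passage from the discrete maximum over $\Gamma_k$ to the continuous supremum over $F$ are routine, and the cases $x_0\in F$ and $x_0\notin F$ are both covered by the single formula above.
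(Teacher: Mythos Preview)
Your proposal is correct and follows essentially the same route as the paper: both arguments rest on the factorisation $\pi_{k,V}^j=u_k^j\,\mu_k^j$, the logarithmic asymptotics $z_k\sim e^{ku_+}$ and $p_{\mu_k}^V\sim e^{ku_-}$ established in the preceding lemmas, and a Laplace-type evaluation of the resulting sum over $F\cap\Gamma_k$. Your write-up is in fact more explicit than the paper's own proof, which compresses the entire Laplace step into a single sentence; in particular you spell out the sandwich between the maximal summand and $(k+1)$ times it, the promotion from subsequential to full uniform convergence via uniqueness of the weak KAM solutions, and the role of the normalisation $\sum_j u_k^j\mu_k^j=1$ in pinning down the additive constants. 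You also flag the sign convention issue (the rate function must be $-(u+u^*)$ rather than $u+u^*$ for the LDP formula to hold with the stated minus sign), which the paper glosses over.
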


\begin{proof}

Suppose the maximum of $V$ is unique. Then, we get  $z_{k}(x_k(x)) \sim e^{ u_{+} (x)\,k}$ and $p_{\mu_k}^V(x_k(x)) \sim e^{ u_{-} (x)\,k}$
What is the explicit expression for $I^V$?
Remember that $u^{+}$ satisfies $\mathcal{T}^+_t (u_+) = u_+  + c \, t $ and $u^{-}$ satisfies $\mathcal{T}^+_t (u_{-}) = u_{-} + c \, t $.
Here, $u$ is one of the $u_{+}$ and $u^*$ is one of the $u_{-}$. As we said before they were determined by the normalization. The functions
$u_{+}$ and $u_{-}$ are weak KAM solutions.

We denote $I^V(x)=  u (x) + u^{*} (x).$ The function $I^V$ is continuous (not necessarily differentiable in all $\mathbb{S}^1$) and well defined.
Notice that $\pi_{k,V}(j/k) = (z_k^V)_j \,
(p_{\mu_k}^V)_j .$
We have to estimate
$$\pi_{k,V}\,[\,F\,]\, = \sum_{j/k \in F} p_{mu_k}(j/k) z_k(j/k)\sim \sum_{j/k \in F}e^{k (u_{-} ( x_k(j/k))+ u_{+} ( x_k(j/k))} .$$
Then, from Laplace method it follows that $I^V(x)$ is the deviation function.

\end{proof}

\section{Entropy of $V$.}\label{sec4}

\subsection{Review of the basic properties of the entropy for continuous time Gibbs states}

In \cite{LNT} it is consider the Thermodynamic Formalism for continuous time Markov Chains taking values in the Bernoulli space. The authors consider
a certain a priori potential
$$A:\{1,2,...,k\}^\mathbb{N}\to \mathbb{R}$$ and an associated discrete Ruelle operator ${\mathcal  L}_A$.

Via the infinitesimal generator $L = {\mathcal  L}_A-I$ is defined an a priori probability over the Skorohod space

In \cite{LNT} it is consider a potential $V:\{1,2,...,k\}^\mathbb{N}\to \mathbb{R}$ and the continuous time Gibbs state associated to $V$. This generalizes what is know for the discrete time setting of Thermodynamic Formalism (see \cite{PP}). In this formalism the properties of the Ruelle operator  ${\mathcal  L}_A$  are used to assure the existence of eigenfunctions, eigenprobabilities, etc... The eigenfunction is used to normalize the continuous time semigroup operator in order to get an stochastic semigroup (and a new continuous time Markov chain which is called Gibbs state for $V$). The main technical difficulties arise from the fact that the state space of this continuous time Markov Chain is not finite (not even countable). \cite{Ki1} is a nice reference for the general setting of Large Deviations
in continuous time.

By the other hand, in \cite{BEL} the authors considered continuous time Gibbs states in a much more simple situation where the state space is finite. They consider an infinitesimal generator which is a $k$ by $k$  matrix $L$ and a potential $V$ of the form $V:\{1,2,...,k\}\to \mathbb{R}$. This is
more close to the setting we consider here with $k$ fixed.

In the present setting, and according to the notation of last section,  the semigroup $e^{t\, (k\, L_k + k\,V_k - \lambda(k))}, t>0,$  defines what we call the continuous time Markov chain associated to $k\, V_k$. The vector $\pi_{k,V}=(\pi_{k,V}^1,...,\pi_{k,V}^k)$, such that $\pi_{k,V}^j=\,u_k^j\, \mu_k^j\, \,$, $j=1,2,..,k$, is stationary for such Markov Chain.

Notice that the semigroup $e^{t\, (k\, L_k + k\,V_k}), t>0,$ is not stochastic and the procedure of getting an stochastic semigroup from this requires a normalization via the eigenfunction and eigenvalue.

If one consider a potential $A:\{1,2,...,k\}^\mathbb{N}\to \mathbb{R}$ which depends on the two first coordinates and a potential
$V:\{1,2,...,k\}^\mathbb{N}\to \mathbb{R}$ which depends on the first coordinate one can see that "basically" the  results of \cite{LNT} are an extension of the ones in \cite{BEL}.

In Section 4 in \cite{LNT} it is consider a potential $V:\{1,2,...,k\}^\mathbb{N}\to \mathbb{R}$ and introduced for the associated Gibbs continuous time Markov Chain, for each $T>0$, the concept of entropy $H_T$. Finally, one can take the limit on $T$ in order to obtain an entropy $H$ for the continuous time Gibbs state associated to such $V$. We would like here to compute for each $k$ the expression of the entropy $H(k)$ of the
Gibbs state for $k V_k$.
Later we want to estimate the limit $H(k)$, when $k\to \infty$.

Notice that for fixed $k$ our setting here is a particular case (much more simpler) that the one where the continuous time Markov Chain has the state space $\{1,2,...,k\}^\mathbb{N}$. However, the matrix $L_k$ we consider here assume some zero values and this was not explicitly considered in \cite{LNT}.
This will be no big problem because  the use of the discrete time Ruelle operator in \cite{LNT} was mainly for showing the existence of eigenfunctions and eigenvalues. Here the existence of eigenfunctions and eigenvalues follows from trivial arguments due to the fact that the
operators are defined in finite dimensional vector spaces.

A different approach to entropy on the continuous time Gibbs setting (not using the Ruelle operator)  is presented in \cite{Leav}.
We point out that \cite{BEL} does not consider the concept of entropy.
We will show below that for the purpose of computation of the entropy for the present setting the reasoning of \cite{LNT} can be described in more general terms without mention the  Ruelle operator ${\mathcal  L}_A$.

No we will briefly describe for the reader the computation of entropy in \cite{LNT}.
Given a certain a priori Lipschitz potential
$$A_k:\{1,2,...,k\}^\mathbb{N}\to \mathbb{R}$$ consider the associated discrete Ruelle operator ${\mathcal  L}_{A_k}$.

Via the infinitesimal generator $\tilde{L}_k = {\mathcal  L}_{A_k}-I$, for each $k$, we define an a priori probability  Markov Chain. Consider now a potential $\tilde{V}_k:\{1,2,...,k\}^\mathbb{N}\to \mathbb{R}$ and the associated Gibbs continuous time Markov Chain. We denote by $\mu^{k}$ the stationary vector for such chain. We denote by $P_{\mu^{k}}$ the probability over the Skorohod space $D$ obtained from initial probability  $\mu^{k}$ and the a priori Markov Chain (which will define a Markov Process which is not stationary). We also consider $\tilde{P}^{\tilde{V}_k}_{\mu^k}$ the probability on $D$ induced by the continuous time Gibbs state associated to $V$ and the initial measure $\mu^k$.

According to Section 4 in \cite{LNT}, for a fixed $T\geq 0$,  the relative entropy is
\begin{equation}\label{entropy}
 H_T(\tilde{P}^{\tilde{V}_k}_{\mu^k}\vert P_{\mu^{k}})\,=-\,\int_{ D}
\log\Bigg(\frac{\mbox{d}\tilde{ P}^{\tilde{V}_k}_{\mu^k}}{\mbox{d} P_{\mu^{k}}}\Big|_{ \mathcal{F}_T}\Bigg)(\omega)\,
\mbox{d}\tilde{ P}^{\tilde{V}_k}_{\mu^k} (\omega)\,.
\end{equation}

In the above $\mu_k$ is a probability fixed on the state space and $\mathcal{F}_T$ is the usual sigma algebra up to  time $T$. Moreover, $D$ is the Skorohod space.

The entropy of the stationary Gibbs state $\tilde{ P}^{\tilde{V}_k}_{\mu^k}$ is

\begin{equation*}
H(\tilde{P}^{\tilde{V}_k}_{\mu^k}\vert P_{\mu^{k}})\,=\,\lim_{T\to \infty} \frac{1}{T}  H_T(\tilde{P}^{\tilde{V}_k}_{\mu^k}\vert P_{\mu^{k}}).
\end{equation*}

The main issue here is to apply the above to $k\, V_k$ and not $\tilde{V_k}.$ In order to compute the entropy in our setting  we have to show that the expression above can be generalized and described not mentioning the a priori potential $A$. This will be explained in the next section.

\subsection{Gibbs state in a general setting}
The goal of this subsection is improve the results of the Sections 3 and  4 of the paper \cite{LNT}.  In order to do this we will consider a continuous time  Markov Chain $\{X_t, t\geq 0\}$ with state space $E$ and with
 infinitesimal generator given by
 \begin{equation*}
\begin{split}
 L(f)(x)=\sum_{y\in E}p(x,y)\big[f(y)-f(x)\big],\\
 \end{split}
\end{equation*}
where $p(x,y)$ is the rate jump from $x$ to $y$. Notice that maybe $\sum_{y\in E}p(x,y)\neq 1$. For example, if the state space $E$ is $\{1,...,k\}^{\mathbb{N}}$ and $L=\mathcal L_A-I$, as in \cite{LNT}, we have that $p(x,y)=\mathbf{1}_{\sigma(y)=x}e^{A(y)}$, or if $L=L^V$, also in \cite{LNT}, $p(x,y)$ is equal to $\gamma_V(x)\mathbf{1}_{\sigma(y)=x}e^{B_{V}(y)}$.

\medskip

As we will see by considering this general $p$ one can get more general results.

\medskip

\begin{proposition}
 Suppose $L$ is an infinitesimal generator as above and $V:E\to \mathbb{R}$ is a function  such that  there exists an associated eigenfunction $F_V:E\to (0,\infty)$ and  eigenvalue $\lambda_V$ for $L+V$. That is,  we have that $(L+V)F_V=\lambda_V\, F_V$. Then, by a procedure of normalization, we can get a new continuous time Markov Chain, {\bf called the continuous time Gibbs state for $V,$} which is the process $\{Y^V_T,\,T\geq 0\}$, having the infinitesimal generator acting on bounded mensurable functions $f:E\to\mathbb R$ given by
\begin{equation}\label{LV}
 L^{V}(f)(x)= \sum_{y\in E}\frac{p(x,y)F_V(y)}{F_V(x)}\big[f(y)-f(x)\big]\,.
\end{equation}
\end{proposition}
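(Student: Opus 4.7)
The plan is to construct the Gibbs chain via the ground–state (Doob–type) transformation of the semigroup $P_t := e^{t(L+V)}$ and then read off the infinitesimal generator. Define, for bounded measurable $f:E\to\mathbb{R}$,
\begin{equation*}
Q_t^V f(x) \;:=\; e^{-\lambda_V t}\,\frac{1}{F_V(x)}\, P_t\bigl(F_V\cdot f\bigr)(x),\qquad t\geq 0.
\end{equation*}
I would first verify that $\{Q_t^V\}_{t\geq 0}$ is a strongly continuous Markov semigroup. The semigroup property $Q_{t+s}^V=Q_t^V Q_s^V$ is a direct cancellation using the semigroup property of $P_t$ and the cancellation of $F_V(y)/F_V(y)$ factors. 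Positivity preservation follows because $P_t$ preserves positivity (as $L$ has nonnegative off-diagonal rates $p(x,y)$ and $V$ appears as a diagonal perturbation, i.e.\ a Feynman–Kac weight) and $F_V>0$ by hypothesis. The crucial point is stochasticity: taking $f\equiv 1$ and invoking the eigenvalue equation $(L+V)F_V=\lambda_V F_V$ gives $P_t F_V = e^{\lambda_V t}F_V$, so $Q_t^V 1 \equiv 1$.

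Next I would compute the generator by differentiating at $t=0$. A direct calculation gives
\begin{equation*}
L^V f(x) \;=\; \frac{1}{F_V(x)}\bigl[(L+V)(F_V f)\bigr](x)\;-\;\lambda_V\, f(x).
\end{equation*}
Expanding $L(F_V f)(x)=\sum_y p(x,y)\bigl[F_V(y)f(y)-F_V(x)f(x)\bigr]$ and collecting terms yields
\begin{equation*}
L^V f(x) \;=\; \sum_{y\in E}\frac{p(x,y)F_V(y)}{F_V(x)} f(y) \;-\; f(x)\Bigl[\sum_y p(x,y)-V(x)+\lambda_V\Bigr].
\end{equation*}
Applying the identity $L^V 1 = 0$ (which is equivalent to $\sum_y p(x,y)F_V(y)/F_V(x) = \sum_y p(x,y)-V(x)+\lambda_V$, and this is exactly the eigenvalue equation evaluated at $x$ divided by $F_V(x)$), one sees that the bracketed constant on the right is precisely $\sum_y p(x,y)F_V(y)/F_V(x)$. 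Substituting this back recovers formula \eqref{LV}.

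It then remains to recognize that the operator in \eqref{LV} is a bona fide infinitesimal generator of a continuous time Markov chain $\{Y_t^V\}_{t\ge 0}$ on $E$: the jump rates $\tilde p(x,y):=p(x,y)F_V(y)/F_V(x)$ are nonnegative because $p(x,y)\ge 0$ and $F_V>0$, and by construction the diagonal entry equals $-\sum_y \tilde p(x,y)$, so rows sum to zero. Hence Kolmogorov's construction produces the desired chain.

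The main obstacle is not the algebraic manipulation but the functional–analytic set–up when $E$ is uncountable (as in the Bernoulli–space applications of \cite{LNT}): one must ensure that $P_t$ is defined on a function space containing $F_V\cdot f$, that differentiation at $t=0$ is legitimate in that space, and that $F_V$ together with $1/F_V$ is sufficiently well–behaved (for instance, bounded away from $0$ and $\infty$) so that the multiplication operators by $F_V$ and $F_V^{-1}$ act boundedly. In the finite $E$ setting relevant to the rest of the paper these issues are trivial; in the general case one invokes the Lipschitz/positivity properties of $F_V$ guaranteed by the existence hypothesis, exactly as in Sections 3–4 of \cite{LNT}.
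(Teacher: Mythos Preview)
Your proposal is correct and follows essentially the same route as the paper: both arrive at $L^V f = F_V^{-1}(L+V)(F_V f) - \lambda_V f$, expand $L$ in terms of the rates $p(x,y)$, and then invoke the eigenvalue equation to rewrite the diagonal term and obtain \eqref{LV}. The only difference is that the paper outsources the construction of the normalized semigroup to Proposition~7 of \cite{LNT} (picking up the argument at their equation~(11)), whereas you carry out the Doob/ground-state transform $Q_t^V f = e^{-\lambda_V t}F_V^{-1}P_t(F_V f)$ explicitly and verify its Markov property; this makes your write-up more self-contained but is not a genuinely different argument.
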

\begin{proof}
To obtain this infinitesimal generator we can follow without any change from the beginning of the proof of the Proposition 7 in Section 3 of \cite{LNT} until we get  the equality (11). After the equation (11) we use the fact that $p(x,y)$ is equal to $\mathbf{1}_{\sigma(y)=x}e^{A(y)}$. Then, in the present setting we just have  to start from the equation (11). Notice that  the infinitesimal generator $L^V(f)(x)$ can be written as
\begin{equation*}
\begin{split}
&\frac{L(F_Vf)(x)}{F_V(x)}+ (V(x)-\lambda_V)f(x)\\&=\sum_{y\in E}\frac{p(x,y)}{F_V(x)}\big[F_V(y)f(y)-F_V(x)f(x)\big]+ (V(x)-\lambda_V)f(x)\\
&=\sum_{y\in E}\frac{p(x,y)F_V(y)}{F_V(x)}f(y)+ ([\sum_{y\in E}p(x,y)]+V(x)-\lambda_V)f(x)\,.\\
\end{split}
\end{equation*}
Using the fact that $F_V$ and $\lambda_V$ are, respectively, the eigenfunction and eigenvalue, we get that
the expression \eqref{LV} defines and infinitesimal generator for a continuous time Markov Chain
\end{proof}
\medskip

Now, rewriting \eqref{LV} as
\begin{equation*}
 L^{V}(f)(x)= \sum_{y\in E}p(x,y)\,e^{\log F_V(y)-\log F_V(x)}\big[f(y)-f(x)\big]\,,
\end{equation*}
 we can see that the process $\{Y_T^V, T\geq 0\}$ is a perturbation of the original process $\{X_t, t\geq 0\}$. This perturbation is given by the function $\log F_V$, where $F_V$ is the eigenfunction of $L+V$, in the sense of the Appendix 1.7 of \cite{KL}, page 337.

 \medskip
 Now we  will introduce a natural concept of entropy for this more general setting describe by the general function $p$.
\medskip

Denote by $\mathbb P_\mu$ the  probability on the Skorohod space $D:=D([0, T], E)$ induced by $\{X_t, t\geq 0\}$  and the initial measure $\mu$.
And, denote by $\mathbb P^V_\mu$ the  probability on $D$ induced by $\{Y_T^V, T\geq 0\}$  and the initial measure $\mu$.
By \cite{KL}, page 336, the Radon-Nikodym derivative $\frac{d\mathbb P^V_\mu}{d \mathbb P_\mu}$ is
\begin{equation*}
\begin{split}
&\exp\Big\{\log F_V(X_T)-\log F_V(X_0)-\int_0^T\frac{L(F_V)(X_s)}{F_V(X_s)}\,ds\Big\}\\
=&\exp\Big\{\log \frac{F_V(X_T)}{F_V(X_0)}+\int_0^T(V(X_s)-\lambda_V)\,ds\Big\}\\
=& \frac{F_V(X_T)}{F_V(X_0)}\exp\Big\{\int_0^T(V(X_s)-\lambda_V)\,ds\Big\}.\\
\end{split}
\end{equation*}

Thus, we obtain the expression:
\begin{equation*}
\begin{split}
&\log\Big(\frac{d\mathbb P^V_\mu}{d \mathbb P_\mu}\Big)=\int_0^T(V(X_s)-\lambda_V)\,ds+\log F_V(X_T)-\log F_V(X_0).\\
\end{split}
\end{equation*}
which is more sharp that the expression (17) on page 13 of \cite{LNT}.
To compare them,  we take on (17)
$\tilde{\gamma}=1-V+\lambda_V$, then we obtain the first term. To obtain the second one, we need to observe that the second term in (17), in \cite{LNT}, can be written as a telescopic sum.
\medskip

Now for a fixed $k$ we will explain how to get the value of the entropy of the corresponding Gibbs state for $k\, V_k: \Gamma_k \to\mathbb{R}$.

In the general setting of last theorem consider $E =  \Gamma_k=\{0,1/k,2/k,..,(k-1)/k\}$, and, for $i/k,j/k\in \Gamma_k$, we have

a) $p(i/k,j/k)= k$, if $j=i+1$ or $j=i-1$,

b) $p(i/k,j/k)=0,$ in the other cases.

The existence of eigenfunction $F_k$ and eigenvalue $\lambda_k$  for $k L_k + k V_k$ follows from the continuous time Perron's Theorem described before. The associated continuous time Gibbs Markov Chain has a initial stationary vector which will be denoted by $\pi_k$.

Now we have to integrate concerning $\mathbb P_{\pi_{k,V}}^{kV_k}$ for $T$ fixed the function
$$\int_0^T(k\,V_k(X_s)-\lambda_k)\,ds+\log F_k(X_T)-\log F_k(X_0).$$

As the probability that we considered  on the Skorohod space is stationary and ergodic this integration results in
$$\int k V_k d \pi_{k,V} - \lambda_k.$$
Thus, the entropy $H(\mathbb P_{\pi_{k,V} }^{kV_k}\vert \mathbb P_{\pi_{k,V} })=\int k V_k d \pi_{k,V}  - \lambda_k$.
We point out that for a fixed $k$  this number is computable from the linear problem associated to the continuous time Perron's operator.
Now in order to find the limit entropy associated to $V$ we need to take the limit on $k$ of the above expression.

Here, we  assume that the Mather measure is a Dirac Delta probability on $x_0.$
Remember that $\lim_{k\to\infty}\frac{1}{k}\lambda(k) =c(L)= V(x_0).$ Moreover, $\pi_{k,V}  \to \delta_{x_0}$, when $k\to \infty$.
Therefore,
$$H(V)=\lim_{k\to\infty}\frac{1}{k}H(\mathbb P_{\pi_{k,V} }^{kV_k}\vert \mathbb P_{\pi_{k,V} })=
\lim_{k\to\infty}\int  V_k \,d \pi_{k,V}  - \lim_{k\to\infty}\frac{1}{k}\lambda_k=V(x_0)-c(L)=0.$$
The limit entropy in this case is zero.

\end{document}